\providecommand\@dotsep{5}
\def\listtodoname{}
\def\listoftodos{\@starttoc{tdo}\listtodoname}
\newcounter{nmcomment}
\begin{document}
\title{Algorithms for the Minimum Generating Set Problem}

%
%
\author{Bireswar Das \and Dhara Thakkar }
\authorrunning{B. Das and D. Thakkar}
%
\institute{Indian Institute of Technology Gandhinagar, Gandhinagar 
\email{\{bireswar,thakkar\_dhara\}@iitgn.ac.in}
}

\maketitle              
\begin{abstract}
For a finite group $G$, a generating set of minimum size is called \emph{a minimum generating set} of $G$. The size of a minimum generating set of $G$ is denoted by $d(G)$. Given a finite group $G$ and an integer $k$, deciding if $d(G)\leq k$ is known as the \emph{minimum generating set} (MIN-GEN) problem. 

A group $G$ of order $n$ has generating set of size $\lceil \log_p n \rceil$ where $p$ is the smallest prime dividing $n=|G|$. This fact is used to design an $n^{\log_p n+O(1)}$-time algorithm for the group isomorphism problem of groups specified by their Cayley tables (attributed to Tarjan by Miller, 1978). The same fact can be used to give an $n^{\log_p n+O(1)}$-time algorithm for the MIN-GEN problem. We show that the MIN-GEN problem can be solved in time $n^{(1/4)\log_p n+O(1)}$ for general groups given by their Cayley tables. This runtime incidentally matches with the runtime of the best known algorithm for the group isomorphism problem.

 We show that if a group $G$, given by its Cayley table, is the product of simple groups then a minimum generating set of $G$ can be computed in time polynomial in the order of $G$.

Given groups $G_i$ along with $d(G_i)$ for $i\in [r]$ the problem of computing $d(\Pi_{i\in[r]} G_i)$ is nontrivial. As a consequence of our result for  products of simple groups we show that this problem also can be solved in polynomial time for Cayley table representation.

For the MIN-GEN problem for permutation groups, to the best of our knowledge, no significantly better algorithm than the brute force algorithm is known. For an input group $G\leq S_n$, the brute force algorithm runs in time $|G|^{O(n)}$ which can be $2^{\Omega(n^2)}$. We show that if $G\leq S_n$ is a primitive permutation group then the MIN-GEN problem can be solved in time quasi-polynomial in $n$.

We also design a $\mathrm{DTIME}(2^n)$ algorithm for computing a minimum generating set of permutation groups all of whose non-abelian chief factors have bounded orders.

\keywords{Algorithmic Group Theory \and Permutation Group Algorithms \and Minimum Generating Set Problem \and Primitive Permutation group \and Solvable Group \and Chief Series \and Complexity Theory \and Cayley Table}

\end{abstract}

\section{Introduction}\label{introduction}

Let $G$ be a finite group. A generating set of $G$ with minimum size is called \emph{a minimum generating set}. The size of a minimum generating set of a group $G$ is denoted by $d(G)$.

In this paper, we consider the problem of computing $d(G)$, computing a minimum generating set of a given group $G$, and a decision version of the problem denoted MIN-GEN. The input to the MIN-GEN problem is a finite group $G$ and an integer $k$, and the task is to decide if $d(G) \leq k$.

Papadimitriou and Yannakakis defined an analogous version of the MIN-GEN problem for quasigroups given by their Cayley tables \cite{papadimitriou}. They proved that the problem is complete for $\beta_2 \mathrm{P}$ (it is the class of all problems in $\mathrm{NP}$ that need $O(\log^2 n)$ nondeterministic bits. It is also denoted as $\mathrm{NP}(\log^2n)$). Later Arvind and Tor\'an prove that the problem is in $\mathrm{DSPACE}(\log^2 n)$ \cite{Arvind}. Arvind and Tor\'an gave a polynomial-time algorithm for the MIN-GEN problem for nilpotent groups given by their Cayley tables \cite{Arvind}.

Various structural and quantitative properties related to the minimum generating set problem has been studied before the above-mentioned results, mainly from a mathematical perspective. Gasch\"utz in 1959 studied the problem and provided some of the key ideas to solve the problem for solvable groups \cite{gaschutz1959mingensolvable}. Based on Gasch\"utz's ideas, Lucchini and Menegazzo designed two algorithms to solve the problem for solvable groups when the input group is given by a polycyclic representation \cite{lucchini-minimal-solvable}. The first algorithm works when a chief series is given as input. The second algorithm is also similar but uses the derived series. Lucchini and Menegazzo implemented these algorithms and tested the performance of those implementations. While explicit runtime analyses of their algorithms are not presented in their paper, it is not hard to see that if we use existing polynomial-time algorithms for some of the subroutines they use (computing a chief series, computing a minimal normal subgroup), then their algorithms actually run in polynomial time for solvable permutation groups. For the sake of completeness, we present a simple recursive algorithm to solve the MIN-GEN problem for solvable groups in Section~\ref{MIN-GENProblem-Solvable-PG} based on the ideas by Gasch\"utz \cite{gaschutz1956}, and Lucchini and Menegazzo \cite{lucchini-minimal-solvable}. 


\subsection{A Faster Algorithm for MIN-GEN}

Given a group $G$ by its Cayley table, it is easy to design an algorithm to solve the MIN-GEN problem in time $n^{\log_p n+O(1)}$, where $n=|G|$ and $p$ is the smallest prime factor of $n$ \cite{Arvind}. The algorithm basically tries all possible subsets of $G$ of size at most $\lceil \log_pn \rceil$. The correctness of the algorithm follows from the fact that any group of order $n$ has a generating set of size at most $\log_pn$ where $p$ is the smallest prime dividing $n$. We ask the question if we can obtain an algorithm for MIN-GEN that runs in time $n^{c\log_pn+O(1)}$ for $c < 1$. We note that reducing the constant factor in the exponent can sometimes be an interesting and challenging problem. One example of such a problem is the group isomorphism problem (GpI) \cite{liptonsBlog}.   

Given two groups by their Cayley table, the group isomorphism problem (GpI) is to decide if they are isomorphic. There is an $n^{\log_{p} n+O(1)}$-time algorithm, known as the \emph{generator-enumerator algorithm}, that solves the group isomorphism problem when $n$ is the order of each input group, and $p$ is the smallest prime dividing $n$. The algorithm is attributed to Tarjan by Miller \cite{Miller}. The first step in the generator-enumerator algorithm is similar to the naive algorithm for MIN-GEN mentioned above: It is to pick a generating set of one of the groups of size $O(\log_pn)$ in a brute-force manner. 

Over the past years, there has been significant progress in improving the exponent in the running time. For $p$-groups, Rosenbaum and Wagner gave an $n^{(1/2) \log_{p} n+O(1)}$-time algorithm \cite{RosenbaumWagener-pGroup}. Rosenbaum    \cite{rosenbaum2013breaking} gave an algorithm to test isomorphism of solvable groups that runs in time $n^{(1/2) \log_{p} n+O(\log n/ \log \log n)}$.  Later they improved the runtime and gave an $n^{(1/4) \log_{p} n+O(\log n/ \log \log n)}$-time algorithm for the solvable group isomorphism problem \cite{RosenbaumBidirectional}. In the same paper, they presented an $n^{(1/2) \log_{p} n+O(1)}$-time isomorphism algorithm for  general groups \cite{RosenbaumBidirectional}. Le Gall and Rosenbaum pointed out that by combining the techniques of Luks \cite{luks2015group} with Rosenbaum's bidirectional collision testing \cite{RosenbaumBidirectional} it is possible to design an $n^{(1/4) \log_{p} n+O(1)}$ algorithm for GpI for general groups   \cite{LeGall-Rosenbaum}.

We design an $n^{(1/4) \log_{p} n+O(1)}$ algorithm for the MIN-GEN problem when the group is given by its Cayley table representation. While it is not clear what the relative complexity of GpI and MIN-GEN is, the runtime of our algorithm for MIN-GEN incidentally matches with the best known algorithm for the group isomorphism problem. We prove this result in Section~\ref{MIN-GEN-General-Groups}.



\subsection{Products of simple groups and product of groups}\label{Product-Simple-Intro}

Simple groups play an important role in group theory. It is known that any simple group can be generated by at most $2$ elements, and one can design a polynomial-time algorithm for computing a minimum generating set of simple groups given by their Cayley tables. How about the product of simple groups? The growth in the size of the minimum generating set of the product of simple groups as we increase the number of simple groups in the product is very interesting. For example, $A_5$, which is a simple group, is generated by two elements. It turns out that $d(A_5^{i}) = 2$ for $i \leq 19$ \cite{Phall1936}. It is also fascinating to note that $d(A_5^{6464040})=5$ \cite{Wiegold-1}. It is not only this curious behavior of the product of simple groups that makes the minimum generating set problem for such groups interesting, but it is also because of the fact that a minimal normal subgroup of a group is a product of simple groups. Furthermore, it is  well known that minimal normal subgroups play an important role in the theory of minimum generating set \cite{lucchini-Generators-and-minimal-normal,lucchini-uniuqe-minimal}. We show that if $G$, given by its Cayley table, is a product of simple groups then a minimum generating set of $G$ can be computed in polynomial time.

By Remak-Krull-Schmidt theorem, we know that every group is an internal direct product of indecomposable subgroups (see e.g., \cite{hungerfordAlgebra}). Therefore, a natural approach to solve the MIN-GEN problem for general group $G$ would be to compute minimum generating sets for the indecomposable subgroups  and next to use the computed information to solve the problem. This motivated the following problem:

Given $G=G_1 \times G_2$ along with $d(G_1)$ and $d(G_2)$ (i.e., $d(G_1)$ and $d(G_2)$ are also given as an input) can we find $d(G)$? As a consequence of our result for the product of simple group, we show that this problem can be solved in polynomial time for Cayley table representation. In other words, we show that the MIN-GEN problem for general groups is polynomial time Turing reducible to the MIN-GEN problem for indecomposable groups (Section~\ref{MIN-GEN-Product-Simple-Groups}). 

\subsection{Menegazzo's, and Cameron's questions}
For a general permutation group $G \leq S_n$, a minimum generating set of $G$ can be computed by trying all possible subsets of $G$ of size $(n-1)$. Note that by Jerrum's filter argument every permutation group $G \leq S_n$ has a generating set of size $(n-1)$ \footnote{This can be improved to $n/2$ (see e.g., \cite{Man-Lucchini-PG-GenSet}).} \cite{jerrum1986compact}. Thus, the runtime of this naive algorithm is $|G|^{O(n)}$ which can be $2^{\Omega(n^2)}$ as $|G|$ could be as large as $n!$. To the best of our knowledge, no significantly better algorithm is known for the MIN-GEN problem for permutation groups. 

In a survey paper on the minimum generating set \cite{Survey-Menegazzo}, Menegazzo asked the following two questions:

\textsl{``Problem 1: Give an algorithm to transform a given set of generators into a generating set of minimum cardinality, for permutation and linear groups.''}

\textsl{``Problem 2: Give an algorithm to find a set of generators of the expected cardinality, for particular classes of permutation and linear groups (e.g., $\log n$ for primitive subgroups of $S_n$, etc.).''}

Cameron also asked the following similar question on his list of problems on permutation groups   \cite{cameron-open-problem}:

\textsl{``Problem 24: Find an efficient algorithm (e.g., an on-line algorithm) for finding a generating set of size at most $n/2$ for the subgroup generated by an arbitrary set of permutations.''}

We first study the MIN-GEN problem for primitive subgroups of $S_n$. Primitive groups are not only important from a group theoretic perspective (see e.g., \cite{dixon-permutation}), they have also played a crucial role in the design of efficient algorithms for the graph isomorphism problem \cite{luks1982isomorphism,babai2016graph}. We design an algorithm that takes a primitive group $G \leq S_n$ as input and outputs $d(G)$ in quasi-polynomial time in $n$ (Section~\ref{MIN-GEN-Primitive-PG}).

The next class of groups we consider are permutation groups, all of whose non-abelian chief factors are of order at most $l$. We denote this class as $\chi(l)$. This is a superclass of solvable groups. Our initial motivation was to study the minimum generating set problem for the class $\Gamma(l)$ of permutation groups, all of whose non-abelian composition factors are isomorphic to subgroups of $S_l$. Note that for fixed $l$, the orders of the non abelian composition factors of groups in $\Gamma(l)$ are bounded. The structure of primitive groups in $\Gamma(l)$ has been studied by Babai, Cameron, and P\'alfy \cite{babai-palfy-cameron}. While we do not know any non-trivial results on the MIN-GEN problem for groups in $\Gamma(l)$, for the class $\chi(l)$, we design an $O(l^n n^{O(1)})$ algorithm for the minimum generating set problem where $n$ is the degree of the input group. For transitive groups in $\chi(l)$, a similar technique can be used to give a sub-exponential time algorithm for the MIN-GEN problem (Section~\ref{Min-Gen-Bounded-non-abelain-chief-factor}).



\section{Group Theory Background}\label{Preliminary}

In this section, we recall some relevant definitions from group theory. An interested reader may refer to standard books for more details (see e.g., \cite{rotman,marshall-hall-theory,dixon-permutation}). 

We consider groups with finitely many elements. The \emph{order} of group $G$ is the number of elements in a group $G$, denoted by $|G|$. A subset $A ( \neq \emptyset)$ of a group $G$ is called a \emph{generating set} of $G$ if every element of $G$ can be written as a finite product of elements of $A$ and their inverses. A generating set $A$ with minimum size is called a \emph{minimum generating} set of $G$. The size of a minimum generating set is denoted by $d(G)$. By convention, $d(\{e\})=0$.

Let $H$ be a subgroup of $G$, if for all $a \in G$, $a^{-1}Ha=H$ then we say that $H$ is a \emph{normal subgroup} of $G$. A group $G$ is called \emph{simple} if it has no proper nontrivial normal subgroup. A normal subgroup $H\neq 1$ of $G$ is called \emph{minimal normal} if the only normal subgroups of $G$ contained in $H$ are $1$ and $H$. A minimal normal subgroup $H$ of a finite group $G$ is either simple or a direct product of isomorphic simple groups (see e.g., page 106, \cite{rotman}). The \emph{normal closure} of a subset $S$ of a group $G$ is a normal subgroup generated by $\{g^{-1}sg | g \in G \}$ of all conjugates of elements of $S$ in $G$, denoted by $\langle S^G \rangle$. 

A group $G$ is called \emph{indecomposable group} if $G\neq \{e\}$ and $G$ is not the (internal) direct product of two of its proper subgroups.  The Remak-Krull-Schmidt theorem says that any finite group can be factored as a direct product of indecomposable groups \cite{hungerfordAlgebra}.

Subgroups of $S_n$ are known as \emph{permutation groups} of \emph{degree} $n$. A group $G \leq S_n$ is said to be \emph{transitive} if $1^{G}=[n]$. Let $G_{\alpha}:=\{x\in G \,|\, \alpha^{x}=\alpha \}$. A transitive group $G$ on a set $[n]$ is said to act \emph{regularly} if $G_{\alpha}=\{ e\}$ for all $\alpha \in [n]$. For a subset $\Gamma \subset [n]$, $\Gamma^x=\{x(\alpha) \,|\, \alpha \in \Gamma \}$. A nonempty subset $\Gamma$ of $[n]$ is called a \emph{block} for $G$ if for each $x \in G$ either $\Gamma^x = \Gamma$ or $\Gamma^x \cap \Gamma = \emptyset$. The set $[n]$ and $\{\alpha\}$ ($\alpha \in [n]$) are called trivial blocks. Any other blocks is called nontrivial. A transitive group $G \leq S_n$ is \emph{primitive} if $G$ has no nontrivial blocks on $[n]$. The \emph{socle} of a group $G$ is the subgroup generated by the set of all minimal normal subgroups of $G$; it is denoted by $ \mathrm{soc}(G)$.


Let $K$ and $H$ be groups and suppose $H$ acts on the nonempty set $\Gamma$. Let $\mathrm{Fun}(\Gamma, K)$ be the set of all functions from $\Gamma$ to $K$. Then the \emph{wreath product} of $K$ by $H$ (denoted by $K \wr_{\Gamma} H$) with respect to this action is defined to be the semidirect product $\mathrm{Fun}(\Gamma, K) \rtimes H$ where $H$ acts on the group $\mathrm{Fun}(\Gamma, K)$ via $f^x(\gamma):=f(\gamma^{x^{-1}})$ for all $f \in \mathrm{Fun}(\Gamma, K)$, $\gamma \in \Gamma$ and $x \in H$.

\begin{definition}(see e.g., section 8.4, \cite{marshall-hall-theory})
In a group $G$, a sequence of subgroups
$$ 1= G_{m} \trianglelefteq  G_{m-1} \trianglelefteq  \dots \trianglelefteq G_1 \trianglelefteq   G_{0}=G$$ is called a \emph{composition series} of $G$ if $G_{i-1} / G_{i}$ is simple for all $1 \leq i \leq m$.
\end{definition}

\begin{definition}(see e.g., section 9.2, \cite{marshall-hall-theory})
A group $G$ is \emph{solvable} if and only if it has a composition series all of whose factors are cyclic groups of prime order. 
\end{definition}

\begin{definition}(see e.g., section 8.4, \cite{marshall-hall-theory})
A \emph{chief series} in a group $G$ is a sequence of subgroups 
$$ 1= G_{m} \trianglelefteq  G_{m-1} \trianglelefteq  \dots \trianglelefteq  G_1   \trianglelefteq  G_{0}=G$$ such that for all $ 1 \leq i \leq m$, $G_{i} \trianglelefteq G$ and each factor $G_{i-1} / G_{i}$ is a minimal normal subgroup of $G / G_i$.
\end{definition}

\begin{theorem}(Jordan-Hölder Theorem)
\noindent Let $G$ be a finite group with $G \neq 1 $ and,
$1=N_{r} \trianglelefteq N_{r-1} \trianglelefteq \cdots \trianglelefteq N_{1} \trianglelefteq N_{0}=G$ and $1=M_{s} \trianglelefteq M_{s-1}\trianglelefteq \cdots \trianglelefteq M_{1} \trianglelefteq M_{0}=G $ are two chief series for $G$, then $r=s$ and there is some permutation $\pi \in S_{r}$ such that, $$ \frac{N_{\pi(i)-1}}{N_{\pi(i)}} \cong \frac{M_{i-1}}{M_{i}}, \quad \text{for} \,\, 1 \leq i \leq r. $$
\end{theorem}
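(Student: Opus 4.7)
I would prove this by induction on the group order, but for the recursion to be clean I would first strengthen the statement in the following way: for every normal subgroup $H \trianglelefteq G$ and every two series $1 = H_k \trianglelefteq \cdots \trianglelefteq H_0 = H$ in which each $H_i$ is $G$-normal and each factor $H_{i-1}/H_i$ is a minimal normal subgroup of $G/H_i$, the two series have the same length and isomorphic factor multisets. Taking $H = G$ recovers the theorem. This reformulation is needed because restricting a chief series of $G$ to the second term $N_1$ gives a series of $G$-normal subgroups whose minimality condition is still measured in the ambient $G/N_i$, not in $N_1/N_i$, so the restriction is not literally a chief series of $N_1$. Parametrising by the ambient group is what makes the induction work, and identifying this is the main conceptual obstacle.

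The base case $|H| = 1$ is immediate, so suppose $|H| > 1$ and compare two $G$-chief series $\{N_i\}$ and $\{M_j\}$ of $H$. If $N_1 = M_1$, the lower subseries are $G$-chief series of $N_1$, which has order strictly less than $|H|$, so the induction hypothesis closes the case after appending the common top factor $H/N_1$. The interesting case is $N_1 \neq M_1$. Since $H/N_1$ is minimal normal in $G/N_1$, no $G$-normal subgroup lies strictly between $N_1$ and $H$, so neither $N_1 \subseteq M_1$ nor $M_1 \subseteq N_1$, and consequently the $G$-normal subgroup $N_1 M_1$, strictly larger than $N_1$ and contained in $H$, must equal $H$. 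Write $K = N_1 \cap M_1$; the diamond (second isomorphism) theorem then yields $N_1/K \cong H/M_1$ and $M_1/K \cong H/N_1$.

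Fix any $G$-chief series of $K$ (which exists by iteratively choosing maximal $G$-normal proper subgroups), and splice it with $K \trianglelefteq N_1 \trianglelefteq H$ to form a first auxiliary series, or with $K \trianglelefteq M_1 \trianglelefteq H$ to form a second. Both are valid $G$-chief series of $H$ whose factor multisets coincide, since the top two factors merely swap. The one subtlety is checking that $N_1/K$ is minimal normal in $G/K$: a $G$-normal $L$ with $K \subsetneq L \subsetneq N_1$ would satisfy $L \cap M_1 = K$, so $LM_1/M_1 \cong L/K$ would be a proper nontrivial $G/M_1$-normal subgroup of $H/M_1$, contradicting the minimality of $H/M_1$ in $G/M_1$; the symmetric statement for $M_1/K$ follows the same way. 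Finally, the induction hypothesis applied to $N_1$ identifies $\{N_i\}$ with the first auxiliary series, and applied to $M_1$ identifies $\{M_j\}$ with the second, completing the argument.
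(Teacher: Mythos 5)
Your proof is correct. Note first that the paper itself does not prove this statement: it is quoted as a classical result (the chief-series form of Jordan--H\"older), with a pointer to Theorem 8.4.4 of Hall's book, so there is no internal proof to compare against. Your argument is the standard direct induction for groups with operators, with the operator domain being $G$ acting by conjugation: your strengthened statement (``$G$-chief series of a $G$-normal subgroup $H$'') is precisely a composition series of $H$ viewed as a $G$-group, and identifying that this is the right induction parameter --- because truncating a chief series of $G$ at $N_1$ does not give a chief series of $N_1$ in its own right --- is indeed the one conceptual point of the proof. The case analysis ($N_1=M_1$ versus $N_1M_1=H$ with $K=N_1\cap M_1$ and the second isomorphism theorem) is the classical argument, and the splicing with a $G$-chief series of $K$ is carried out correctly. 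The only step you leave implicit is that, in verifying minimality of $N_1/K$ in $G/K$, the subgroup $LM_1$ is a \emph{proper} subgroup of $H$ (otherwise no contradiction with the minimality of $H/M_1$ arises); this is immediate from $|LM_1|=|L|\,|M_1|/|K|<|N_1|\,|M_1|/|K|=|N_1M_1|=|H|$, so it is a one-line addition rather than a gap. For comparison, the textbook route cited by the paper deduces the result from the Schreier refinement theorem via the Zassenhaus butterfly lemma for operator groups, which gives the stronger statement that any two $G$-normal series admit equivalent refinements; your induction is more elementary and self-contained, at the price of proving only the chief-series statement itself. Either way, the factor correspondence you construct is in fact a $G$-isomorphism, which is more than the stated theorem requires.
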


\begin{remark}
The common version of the Jordan-Hölder theorem is stated in terms of the composition series. However, the above theorem is also called the Jordan-Hölder Theorem (see e.g., Theorem 8.4.4,  \cite{marshall-hall-theory}).
\end{remark}

\section{Results on Extending Generating Sets}\label{Meta-Theorem}
In this section, we state some results that we use in the later sections to prove the main theorems. Each of these results provides ways of finding a generating set of a group from the generating set of a quotient of the group. The procedures described in Lemma \ref{Gen-Set-Quotient} and Lemma \ref{Gen-Set-Quotient-Cayley} serve as subroutines for some of the results that we later prove. These two lemmas are also important to avoid taking multiple levels of quotients (e.g., a quotient of quotient group, etc.) in the recursive algorithms that we design.    

\begin{theorem}\cite{lucchini-Generators-and-minimal-normal}\label{bound-on-d(G)}
If $G$ is a finite group and $N$ is a minimal normal subgroup of G, then $d(G)\leq max(2,d(G/N)+1)$. In particular, $d(G/N)\leq d(G) \leq d(G/N)+1$.
\end{theorem}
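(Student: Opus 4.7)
The plan is to establish the upper bound $d(G)\le\max(2,d(G/N)+1)$; the other half $d(G/N)\le d(G)$ is immediate since images of any generating set of $G$ generate $G/N$.

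First handle the degenerate case $d(G/N)=0$: then $G=N$ is its own minimal normal subgroup, so $G$ has no non-trivial proper normal subgroup and is therefore simple. Every finite simple group is generated by at most two elements (trivially for cyclic groups of prime order, and as a consequence of the classification of finite simple groups in the non-abelian case), so $d(G)\le 2$, matching the bound.

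Now assume $d:=d(G/N)\ge 1$. Lift a minimum generating set of $G/N$ to elements $g_1,\dots,g_d\in G$, and set $H=\langle g_1,\dots,g_d\rangle$, so that $HN=G$. If $H=G$ we are done with $d$ generators; otherwise the aim is to find $n\in N$ with $\langle H,n\rangle=G$, which will yield $d(G)\le d+1$. The key identity is
\[
\langle H,n\rangle = H\cdot\langle n^{H}\rangle,
\]
valid because $\langle n^{H}\rangle$ is normalised by $H$; hence $\langle H,n\rangle=G$ iff $\langle n^{H}\rangle\cdot(H\cap N)=N$. When $N$ is abelian, it is elementary abelian and a simple $\mathbb{F}_p[G/N]$-module. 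Since $HN=G$ and $N$ acts trivially on itself by conjugation, one verifies that $H\cap N$ is normal in all of $G$, so minimality of $N$ combined with $H\ne G$ forces $H\cap N=1$. The same equality $HN=G$ makes the $H$- and $G$-submodule structures on $N$ coincide, so $N$ is irreducible as an $H$-module, and $\langle n^{H}\rangle = N$ for any $n\ne 1$; this finishes the abelian case.

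The main obstacle is the non-abelian case, where $N\cong T^{k}$ for some non-abelian finite simple group $T$ and $H$ permutes the $k$ simple factors transitively by conjugation (because $HN=G$ does). One must choose $n=(t_1,\dots,t_k)\in N$ so that its $H$-orbit together with $H\cap N$ generates $N$. A proof along Lucchini's lines combines the transitive $H$-action on the factors with strong two-generation properties of non-abelian finite simple groups (for instance, Aschbacher--Guralnick type results on random generation) to show that such an $n$ exists whenever $H\ne G$; this yields $d(G)\le d+1$ and completes the bound.
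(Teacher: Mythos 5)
Your framework is sound as far as it goes: lifting a minimum generating set of $G/N$ to $H\le G$ with $HN=G$, the identity $\langle H,n\rangle=H\langle n^{H}\rangle$, and the consequent criterion $\langle H,n\rangle=G$ iff $\langle n^{H}\rangle(H\cap N)=N$ are all correct, and your abelian case is essentially the argument the paper gives in its appendix (Theorem~\ref{bound-on-d(G)-abelian-case}): when $N$ is abelian the $H$-invariant subgroups of $N$ coincide with the $G$-invariant ones, so minimality forces $\langle n^{H}\rangle=N$ for every $n\ne 1$ and any single nontrivial element of $N$ completes the lifted generators. The case $G=N$ via two-generation of simple groups also matches the paper.

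The genuine gap is the non-abelian case, which is exactly the hard part of the theorem, and you do not prove it: you assert that ``a proof along Lucchini's lines'' produces some $n\in N$ with $\langle H,n\rangle=G$ whenever $H\ne G$. Moreover, the statement you defer to is strictly stronger than the theorem actually available in \cite{lucchini-Generators-and-minimal-normal} (and quoted in Section~\ref{Meta-Theorem} of the paper): Lucchini's result allows the lifts themselves to be perturbed, producing $x_{1},\ldots,x_{t+1}\in N$ with $G=\langle g_{1}x_{1},\ldots,g_{t}x_{t},x_{t+1}\rangle$ or $G=\langle x_{1}g_{1},g_{2},\ldots,g_{t},x_{2}\rangle$; it does not say that the fixed subgroup $H=\langle g_{1},\ldots,g_{t}\rangle$ generated by arbitrary lifts can be completed by a single element of $N$. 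Your route needs precisely that: $N$ must not be covered by its proper $H$-invariant subgroups containing $H\cap N$, for whatever lifts were chosen. Unlike the abelian case, minimality of $N$ in $G$ gives no control over $H$-invariant subgroups of $N$ (invariance under $H$ together with $N$ yields $G$-invariance only when $N$ centralizes itself), so this claim requires a genuine argument or a precise citation, and an appeal to Aschbacher--Guralnick-type random generation results as stated does not deliver it. To repair the proof you should either establish (or correctly cite) the one-element completion for a fixed $H$, or restructure the step so the extra elements of $N$ are allowed to multiply the lifts $g_{i}$, which is the form in which the non-abelian case is actually proved by Lucchini and invoked by the paper.
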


It is standard fact that if $G$ is solvable then its minimal normal subgroups are abelian. In the Appendix (Theorem \ref{bound-on-d(G)-abelian-case}) we provide a proof of the above theorem for the case when $N$ is abelian for the sake of completeness of the algorithm for computing minimum generating set of  solvable permutation groups (Section~\ref{MIN-GENProblem-Solvable-PG}).

The proof of the case when $N \neq G$ and $N$ is non-abelian is non-trivial. However, we mention that in this case there exists $x_{1},\ldots,x_{t+1} \in N$ such that either $\{g_{1}x_{1},\ldots,g_{t}x_{t},x_{t+1}\}$ generates $G$ or $\{ x_{1} g_{1},g_{2},\ldots,g_{t},x_{2}\}$ generates $G$ given that $G/N=\langle g_{1}N,\ldots,g_{t}N\rangle$. An interested reader may refer to the paper by Lucchini \cite{lucchini-Generators-and-minimal-normal}.

The next theorem is due to Gasch\"utz and it can be used to get a generating set of a group from a generating set of a quotient group. 
\begin{theorem}\cite{gaschutz1956}\label{Gaschütz-thorem}
Let $G$ be a finite group, and let $N$ be a normal subgroup of $G$. Let $g_{1},\ldots,g_{t} \in G$ be such that $G/N=\langle g_{1}N,\ldots,g_{t}N\rangle$. If $G$ can be generated with $t$ element then there exist $x_1,\ldots,x_t \in N$ such that $G=\langle g_1x_1,\ldots,g_tx_t\rangle$.
\end{theorem}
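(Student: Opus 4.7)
The plan is to adapt Gasch\"utz's classical counting argument. First I would define $\Phi_G(t)$ to be the number of ordered $t$-tuples generating $G$, and observe that the natural surjection $\pi \colon G \to G/N$ sends any generating $t$-tuple of $G$ to a generating $t$-tuple of $G/N$. Therefore
$$\Phi_G(t) = \sum_{\bar{g}} A(\bar{g}),$$
where $\bar{g}$ ranges over generating $t$-tuples of $G/N$ and $A(\bar{g})$ counts the lifts of $\bar{g}$ in $G^t$ that generate $G$. Since $G$ is generated by $t$ elements we have $\Phi_G(t) > 0$, so to conclude it suffices to show $A(\bar{g}) > 0$ for every generating $t$-tuple $\bar{g}$ of $G/N$, in particular for $(g_1 N, \ldots, g_t N)$.

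The main step is to prove that $A(\bar{g})$ does not depend on $\bar{g}$. Fix $\bar{g} = (g_1 N, \ldots, g_t N)$ and, for a subgroup $H \leq G$, count the number of $(x_1, \ldots, x_t) \in N^t$ such that $g_i x_i \in H$ for all $i$. A coset calculation shows $\{x \in N : g_i x \in H\} = g_i^{-1} H \cap N$ is either empty or a coset of $H \cap N$ in $N$, and it is nonempty iff $g_i \in HN$. Since the $\bar g_i$ generate $G/N$, requiring this for all $i$ is equivalent to $HN = G$. Hence the number of such tuples is $|H \cap N|^t$ when $HN = G$ and $0$ otherwise.

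Applying M\"obius inversion on the subgroup lattice of $G$, with $\mu$ the associated M\"obius function, I would obtain
$$A(\bar{g}) = \sum_{\substack{H \leq G \\ HN = G}} \mu(H, G)\,|H \cap N|^t,$$
which depends only on $N$, $G$, and $t$, and not on $\bar{g}$. Write this common value as $A$. Then $\Phi_G(t) = A \cdot \Phi_{G/N}(t)$, and since $\Phi_G(t) > 0$ we must have $A > 0$. Applying this to our specific $\bar{g} = (g_1 N, \ldots, g_t N)$ yields the desired $x_1, \ldots, x_t \in N$ with $G = \langle g_1 x_1, \ldots, g_t x_t \rangle$.

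The main obstacle is proving that $A(\bar{g})$ is independent of $\bar{g}$; the subtlety is that only subgroups $H$ with $HN = G$ contribute, and one must recognize that the generation hypothesis on $\bar{g}$ is exactly what forces this restriction uniformly across all $\bar{g}$. Once this invariance is established, the positivity of $A$ follows at once from $\Phi_G(t) > 0$, completing the proof.
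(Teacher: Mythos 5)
Your proof is correct: the coset computation giving $|H\cap N|^t$ lifts into $H$ exactly when $HN=G$, the use of the generation hypothesis on $\bar g$ to force $HN=G$, and the M\"obius inversion showing $A(\bar g)$ is independent of $\bar g$ together constitute the classical Gasch\"utz counting argument. The paper itself offers no proof of this theorem (it is quoted from Gasch\"utz's 1956 paper), and your argument is essentially the standard one from that source, so there is nothing to reconcile.
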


In the above theorem, the number of choices for  $x_1,\ldots,x_t$ is too large. The situation becomes better when the normal subgroup $N$ is abelian, as stated in Theorem~\ref{Generalization-of-lucchini-lemma}. Theorem~\ref{Generalization-of-lucchini-lemma} is essentially due to Lucchini and Menegazzo. However it is not stated exactly as we state in this paper. In the Appendix we prove how this version can be obtained. 

\begin{theorem}\cite{lucchini-minimal-solvable}\label{Generalization-of-lucchini-lemma} Let $N$ be an abelian minimal normal subgroup of a finite group $G$, let $\{e_{1},\dots,e_{m}\}$ be a generating set of $N$ and let \{$g_{1}N,\dots,g_{t}N\}$ be a minimum generating set of $G/N$. If $G$ can also be generated by $t$ elements then either $G=\langle g_{1},\dots,g_{t}\rangle$ or there exist $i$, $1 \leq i \leq t$, and $j, 1 \leq j \leq m$, such that  $\{g_{1},\dots,g_{i-1},g_{i}e_{j},g_{i+1},\dots,g_{t}\}$ is a generating set of $G$. 
\end{theorem}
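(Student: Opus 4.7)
The plan is to parameterise the candidate generating sets by $x=(x_1,\dots,x_t)\in N^t$ and show that the ``bad'' parameters, for which $\langle g_1x_1,\dots,g_tx_t\rangle\neq G$, form a proper $\mathbb{F}_p$-subspace of $N^t$. Since that subspace cannot contain every distinguished vector of the form $(0,\dots,0,e_j,0,\dots,0)$, one of them will witness the required modification.

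First I would record that $N$ is an elementary abelian $p$-group, hence a finite-dimensional $\mathbb{F}_p$-vector space, and $\{e_1,\dots,e_m\}$ is therefore also an $\mathbb{F}_p$-spanning set: the $p$-Sylow of $N$ is characteristic, so normal in $G$, forcing $N$ to be a $p$-group by minimality, and the characteristic subgroup $N^p$ must then be trivial, again by minimality. Set $H=\langle g_1,\dots,g_t\rangle$. Since the cosets $g_iN$ generate $G/N$, we have $HN=G$, so $H\cap N$ is normalised by $H$ and (because $N$ is abelian) by $N$, hence normal in $G$. By minimality of $N$, either $H\cap N=N$, which means $H=G$ and the first branch of the conclusion holds, or $H\cap N=1$. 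I would assume the latter for the remainder.

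For $x\in N^t$, put $H_x:=\langle g_1x_1,\dots,g_tx_t\rangle$; the same dichotomy yields $H_x\cap N\in\{1,N\}$. Let $F$ be free on $y_1,\dots,y_t$, let $R=\ker(F\to G/N,\;y_i\mapsto g_iN)$, and let $\psi_x\colon F\to G$ be the homomorphism $y_i\mapsto g_ix_i$. A word-by-word expansion, using the abelianness of $N$ and the conjugation action of $G$ on $N$, gives for every $r\in R$
$$ \psi_x(r)\;=\;\psi_0(r)\cdot\alpha_r(x), $$
where $\alpha_r\colon N^t\to N$ is $\mathbb{F}_p$-linear (explicitly a sum of terms of the form $u_l\cdot x_{i_l}^{\pm 1}$ with each $u_l$ a product of $g_j$'s and $\cdot$ denoting conjugation). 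The assumption $H\cap N=1$ means $\psi_0$ vanishes on $R$, so $\psi_x(r)=\alpha_r(x)$. Moreover $\psi_x(R)$ equals $H_x\cap N$, so $H_x=G$ iff $\alpha_r(x)\neq 0$ for some $r\in R$.

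Let $V:=\bigcap_{r\in R}\ker\alpha_r\subseteq N^t$. By Gasch\"utz's Theorem~\ref{Gaschütz-thorem}, the hypothesis that $G$ can be generated by $t$ elements produces some $x\in N^t$ with $H_x=G$, i.e.\ $x\notin V$; hence $V$ is a proper $\mathbb{F}_p$-subspace of $N^t$. The vectors $\mathbf{v}_{i,j}:=(0,\dots,0,e_j,0,\dots,0)$ (with $e_j$ in position $i$), for $i\in[t]$ and $j\in[m]$, $\mathbb{F}_p$-span $N^t$ because the $e_j$'s span $N$; therefore some $\mathbf{v}_{i,j}$ lies outside $V$, which is exactly the statement that $\langle g_1,\dots,g_{i-1},g_ie_j,g_{i+1},\dots,g_t\rangle=G$. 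The main technical point is the $\mathbb{F}_p$-linearity of $\alpha_r$; it only requires carefully commuting the $x_{i_l}^{\pm 1}$ past the remaining $g$'s by conjugation and using the abelianness of $N$ to collapse the resulting product into an additive sum, but this is the one place where care is needed.
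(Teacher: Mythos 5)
Your proposal is correct. The skeleton matches the paper's: you establish the dichotomy $\langle g_1,\dots,g_t\rangle\cap N\in\{1,N\}$ (the paper gets the first branch via Theorem~\ref{bound-on-d(G)-abelian-case}, you get it directly from minimality plus $HN=G$, which is equally valid), and you invoke Gasch\"utz's theorem to produce one good tuple $(u_1,\dots,u_t)$. Where you genuinely diverge is at the key step: the paper simply cites Lemma~\ref{Lucchini-extension-lemma} of Lucchini--Menegazzo as a black box to pass from ``some tuple works'' to ``some single-coordinate modification by an $e_j$ works,'' whereas you prove that step from scratch. Your argument -- $N$ is elementary abelian, $\psi_x(r)=\psi_0(r)\alpha_r(x)$ with $\alpha_r\colon N^t\to N$ linear, $\psi_x(R)=H_x\cap N$, so the bad parameters form the proper subspace $V=\bigcap_{r\in R}\ker\alpha_r$ which cannot contain all the spanning vectors $(0,\dots,e_j,\dots,0)$ -- is sound; the linearity of $\alpha_r$ does hold after pushing the $x$-letters to the right (the conjugators are words in the $g_j$'s depending only on $r$, and conjugation is an endomorphism of the abelian group $N$), and the identities $\psi_0(R)=1$ (from $H\cap N=1$) and $\psi_x(R)=H_x\cap N$ check out. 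The trade-off: the paper's proof is shorter and modular but rests on the cited lemma, which is stated for arbitrary abelian normal $N$; your proof is self-contained but uses the $\mathbb{F}_p$-structure available only because $N$ is a \emph{minimal} normal subgroup (which is all that is needed here), and in effect re-derives the mechanism behind the Lucchini--Menegazzo lemma in that special case.
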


The following theorem can be used to obtain a minimum generating set of $G$ from a minimum generating set of $G/N$, where $N$ is an abelian minimal normal subgroup. 

\begin{theorem}\cite{lucchini-minimal-solvable,lucchini-Generators-and-minimal-normal}\label{Abelian-Theorem}
Let $G$ be a group and let $N=\langle e_1,\ldots,e_l \rangle$ be an abelian minimal normal subgroup of $G$. If $G/N=\langle g_{1}N,\dots,g_{t}N\rangle$ and $d(G/N)=t$ then 
\begin{itemize}
    \item[(i)] if $d(G)=t$ then either $G=\langle g_{1},\dots,g_{t}\rangle$ or there exists $i$, $1 \leq i \leq t$, and $j, 1 \leq j \leq l$, such that  $G= \langle g_{1},\dots,g_{i-1},g_{i}e_{j},g_{i+1},\dots,g_{t} \rangle$; or
    \item[(ii)] $d(G)=t+1$ and $G=\langle g_{1},\ldots,g_{t},x \rangle$ for any $x\,(\neq1) \in N$.
\end{itemize}
\end{theorem}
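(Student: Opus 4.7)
The plan is to apply Theorem~\ref{bound-on-d(G)} to pin down $d(G)$ to one of two values and then dispatch each case using the appropriate earlier result. Since $N$ is a minimal normal subgroup, Theorem~\ref{bound-on-d(G)} yields $t = d(G/N) \leq d(G) \leq d(G/N)+1 = t+1$, so $d(G) \in \{t, t+1\}$. This immediately produces the dichotomy between cases (i) and (ii).

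For case (i), where $d(G)=t$, the conclusion is essentially the statement of Theorem~\ref{Generalization-of-lucchini-lemma}: its hypotheses are exactly met, taking $\{e_1,\ldots,e_l\}$ as the generating set of $N$, and its conclusion is the required statement. So I would simply invoke it.

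For case (ii), where $d(G)=t+1$, I would show that adjoining any nontrivial $x \in N$ to $\{g_1,\ldots,g_t\}$ already produces a generating set. Setting $H = \langle g_1,\ldots,g_t,x\rangle$, the cosets $g_i N$ generate $G/N$, so $HN = G$. The heart of the argument is to prove $H \cap N \trianglelefteq G$. Using the decomposition $G = HN$, every element of $G$ can be written $hn$ with $h \in H$, $n \in N$: conjugation by $h$ preserves $H \cap N$ because $H \cap N$ lies inside both $H$ (which $h$ normalizes trivially) and $N$ (which $h$ normalizes because $N \trianglelefteq G$); conjugation by $n$ preserves $H \cap N$ because $N$ is abelian and $H \cap N \leq N$. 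Once $H \cap N \trianglelefteq G$ is established, the minimality of $N$ together with the fact that $x \in H \cap N$ and $x \neq 1$ forces $H \cap N = N$. Combined with $HN = G$ this yields $H = G$, and since $|\{g_1,\ldots,g_t,x\}| = t+1 = d(G)$, this is a minimum generating set.

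I expect the only delicate point to be the normality argument in case (ii), which crucially exploits both the abelianness of $N$ and the decomposition $G = HN$; the rest is a direct application of the results already stated in Section~\ref{Meta-Theorem}.
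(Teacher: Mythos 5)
Your proposal is correct, and its overall skeleton matches how the paper assembles this result: the dichotomy $d(G)\in\{t,t+1\}$ comes from Theorem~\ref{bound-on-d(G)}, and case (i) is literally Theorem~\ref{Generalization-of-lucchini-lemma} (which the paper proves in the Appendix via Lemma~\ref{Lucchini-extension-lemma}), so invoking it is exactly what is intended. Where you genuinely diverge is case (ii). The paper's argument (Theorem~\ref{bound-on-d(G)-abelian-case} in the Appendix) fixes $x\neq 1$ in $N$, uses minimality to get $\langle \{x\}^G\rangle=N$, and then shows each element of $N$ is reachable from $g_1,\dots,g_t,x$ by rewriting the conjugating element as $n'g_{j_1}\cdots g_{j_r}$ and cancelling the $N$-part via abelianness; strictly speaking it writes each $n\in N$ as a single conjugate of $x$, which should be a product of conjugates, a small imprecision the argument absorbs. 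Your route instead sets $H=\langle g_1,\dots,g_t,x\rangle$, notes $HN=G$, proves $H\cap N\trianglelefteq G$ (conjugation by $h\in H$ preserves both $H$ and $N$; conjugation by $n\in N$ centralizes $H\cap N$ since $N$ is abelian), and then minimality forces $H\cap N=N$, hence $H=G$. This is a clean subgroup-level argument that avoids element-wise bookkeeping, sidesteps the single-conjugate slip, and, like the paper's version, actually shows $\{g_1,\dots,g_t,x\}$ generates $G$ for every $x\neq 1$ in $N$ regardless of whether $d(G)=t$ or $t+1$; the paper's version, in turn, is the one reused algorithmically (it tells you which candidate sets to try), so both formulations earn their keep.
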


The following theorem can be used to get a minimum generating set of $G$ from a minimum generating set of $G/N$, where $N$ is a non-abelian minimal normal subgroup. 

\begin{theorem}\cite{gaschutz1956,lucchini-Generators-and-minimal-normal}\label{Non-Abelian-Theorem}
Let $G$ be a group and let $N$ be a non-abelian minimal normal subgroup of $G$. If $G/N=\langle g_{1}N,\dots,g_{t}N\rangle$ and $d(G/N)=t$ then 
\begin{itemize}
    \item[(i)] if $d(G)= t$ then there exists $x_{1},\ldots,x_{t} \in N$ such that $G=\langle g_{1}x_{1},\ldots,g_{t}x_{t}\rangle$; or
    \item[(ii)] $d(G) = t+1 $ and there exists $x_{1},\ldots,x_{t+1} \in N$ such that either 
    \[G=\langle \{g_{2},\ldots,g_{t}\}\cup\{ x_{1} g_{1},x_{2}\}\rangle\] or
    \[G=\langle g_{1}x_{1},\ldots,g_{t}x_{t},x_{t+1}\rangle.\]  
\end{itemize}
\end{theorem}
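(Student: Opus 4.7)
The plan is to reduce to the two cases via the bound of Theorem \ref{bound-on-d(G)}, which gives $d(G/N)\leq d(G)\leq d(G/N)+1$; combined with the hypothesis $d(G/N)=t$, this pins down $d(G)\in\{t,t+1\}$, matching exactly the two alternatives stated in the conclusion.

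For case (i), when $d(G)=t$, the statement is an immediate invocation of Gasch\"utz's Theorem (Theorem \ref{Gaschütz-thorem}): since $G$ admits a generating set of size $t$ and $\{g_1N,\ldots,g_tN\}$ already generates $G/N$, there exist $x_1,\ldots,x_t\in N$ with $G=\langle g_1x_1,\ldots,g_tx_t\rangle$. Nothing beyond Gasch\"utz is needed here.

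For case (ii), when $d(G)=t+1$, I would follow the argument of Lucchini sketched in the paragraph preceding the theorem. The key structural input is that a non-abelian minimal normal subgroup $N$ decomposes as $T_1\times\cdots\times T_k$ with the $T_i$ isomorphic non-abelian simple groups that $G$ permutes transitively by conjugation. Starting from an arbitrary lift $H=\langle g_1x_1,\ldots,g_tx_t\rangle$, one has $HN=G$ but $H\neq G$ (since $d(G)>t$); the analysis then splits according to how $H\cap N$ sits inside $N$. If lifts can be chosen so that $H$ is a complement of $N$, one exhibits a single element $x_{t+1}\in N$ whose $H$-normal closure fills $N$, using the transitive action of $H\cong G/N$ on the factors $T_i$ and the fact that in a non-abelian simple group the normal closure of any non-identity element is the whole group; this yields the second form $\langle g_1x_1,\ldots,g_tx_t,x_{t+1}\rangle=G$. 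If no complement arises from pure twisting, the replacement-type argument instead modifies $g_1$ by a factor from $N$ and adjoins a further element $x_2\in N$, producing the first form $\langle\{g_2,\ldots,g_t\}\cup\{x_1g_1,x_2\}\rangle=G$.

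The main obstacle is case (ii), and the reason the abelian argument (Theorem \ref{Abelian-Theorem}) does not transfer directly is precisely that $H\cap N$ need not be normal in $G$ when $N$ is non-abelian: in the abelian case $N$ automatically centralizes any subgroup of itself, so $H\cap N$ is $G$-normal and, by minimality of $N$, trivial or all of $N$; this clean dichotomy is unavailable here. Overcoming this requires the finer module-theoretic analysis of the $G/N$-action on the simple direct factors of $N$ referenced above, together with the normal-closure property of non-abelian simple groups, which together justify why exactly the two stated generator shapes exhaust the possibilities in case (ii).
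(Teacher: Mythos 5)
A point of comparison first: the paper does not prove Theorem~\ref{Non-Abelian-Theorem} at all --- it is quoted from Gasch\"utz \cite{gaschutz1956} and Lucchini \cite{lucchini-Generators-and-minimal-normal}, and the paper explicitly remarks (after Theorem~\ref{bound-on-d(G)}) that the case of a non-abelian minimal normal subgroup is non-trivial, referring the reader to Lucchini. Your reduction to $d(G)\in\{t,t+1\}$ via Theorem~\ref{bound-on-d(G)}, and your case (i) as a direct application of Gasch\"utz's lifting theorem, are correct, but they are exactly the easy part.

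The genuine gap is in case (ii). Your key step is: if the lift $H=\langle g_1x_1,\ldots,g_tx_t\rangle$ is a complement to $N$, then some $x_{t+1}\in N$ has $H$-normal closure equal to $N$, ``using the transitive action of $H$ on the factors $T_i$ and the fact that in a non-abelian simple group the normal closure of any non-identity element is the whole group.'' Simplicity of $T_i$ only says that the closure of a non-identity element of $T_i$ under conjugation by $T_i$ itself --- that is, by elements of $N$ --- is all of $T_i$; but $\langle x^{H}\rangle$ is closed only under conjugation by $H$, and since $H\cap N=1$ those inner automorphisms of $N$ are not available, so $\langle x^{H}\rangle$ can be a proper $H$-invariant subgroup, and transitivity of $H$ on the factors does not repair this. (You correctly identify in your last paragraph that the failure of $\langle H,x\rangle\cap N$ to be normal in $G$ is the obstruction, but the normal-closure observation does not overcome it.) Whether some $x\in N$ satisfies $\langle H,x\rangle=G$, and what to do when no complement can be produced by twisting (your second branch is asserted with no argument at all), is precisely the substantive content of Lucchini's theorem; its proof needs a Gasch\"utz-style counting argument over cosets of $N$ together with generation results for the almost simple and monolithic groups involved (input of the Guralnick--Kantor type \cite{Kantor-simple-group}), none of which follows from what you wrote. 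So your proposal establishes the dichotomy and case (i), but case (ii) --- the only part actually requiring proof --- remains unproved; one must either reproduce Lucchini's argument or cite it, as the paper does.
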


\begin{lemma}\label{Gen-Set-Quotient}
Let $G \leq S_n$ be a permutation group and let $A$ be a normal subgroup of  $G$ both given by their generating sets. Let $ \widetilde{N}=\langle h_1A,\ldots,h_lA \rangle$ be the minimal normal subgroup of $G/A$. Let $H$ be the subgroup of $G$ such that $H/A=\widetilde{N}$. Then given a minimum generating set of $G/H$, a minimum generating set of $G/A$ can be found in time $|\widetilde{N}|^{t+1} n^{O(1)}$, where $t=d(G/H)$. Moreover, if $\widetilde{N}$ is abelian then computing a minimum generating set of $G/A$ from a minimum generating set of $G/H$ takes $n^{O(1)}$ time.
\end{lemma}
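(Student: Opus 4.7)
The plan is to reduce the construction of a minimum generating set of $G/A$ to the enumeration schemes supplied by Theorems~\ref{Abelian-Theorem} and~\ref{Non-Abelian-Theorem} applied with ambient group $G/A$ and minimal normal subgroup $\widetilde N$. First I would lift the given minimum generating set $\{s_1H,\dots,s_tH\}$ of $G/H$ to coset representatives $s_1,\dots,s_t\in G$. By the Third Isomorphism Theorem $G/H\cong(G/A)/\widetilde N$, so $\{s_1\widetilde N,\dots,s_t\widetilde N\}$ is a minimum generating set of $(G/A)/\widetilde N$ and $d((G/A)/\widetilde N)=t$. Thus the hypotheses of the two structural theorems are met, with $t$ lifted generators already in hand.

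Next, I would precompute an explicit transversal $T$ of $A$ in $H$. Since $H=\langle A,h_1,\dots,h_l\rangle$ and $A$ are permutation groups on $[n]$ whose generating sets are available, strong generating sets for both can be constructed via Schreier--Sims in $n^{O(1)}$ time, and the $|\widetilde N|$ coset representatives can then be listed in $|\widetilde N|\cdot n^{O(1)}$ time. The elements of $T$ serve as concrete witnesses for the ``$x_i\in\widetilde N$'' appearing in Theorems~\ref{Abelian-Theorem} and~\ref{Non-Abelian-Theorem}. In the non-abelian case, Theorem~\ref{Non-Abelian-Theorem} asserts that either $d(G/A)=t$ with $G/A=\langle s_1x_1A,\dots,s_tx_tA\rangle$ for some $(x_1,\dots,x_t)\in T^t$, or $d(G/A)=t+1$ with a generating set of one of the two prescribed shapes for some $(x_1,\dots,x_{t+1})\in T^{t+1}$. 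I would enumerate all such tuples---at most $|\widetilde N|^{t+1}$ of them---and for each candidate set $\{c_1,\dots,c_k\}\subseteq G$ verify whether $\langle c_1A,\dots,c_kA\rangle=G/A$ by testing $\langle c_1,\dots,c_k\rangle\cdot A=G$, which in turn reduces to checking $\langle c_1,\dots,c_k,\mathrm{gens}(A)\rangle=G$ with the polynomial-time Schreier--Sims order/membership routine. Each verification costs $n^{O(1)}$, so the total running time is $|\widetilde N|^{t+1}\cdot n^{O(1)}$; one returns a size-$t$ witness if the first search succeeds and a size-$(t+1)$ witness otherwise.

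In the abelian case, Theorem~\ref{Abelian-Theorem} confines the search to only $tl+2$ candidate sets (the original lifted tuple, the $tl$ tuples obtained by multiplying one $s_i$ by one of the $l$ given generators of $\widetilde N$, and the extension of $\{s_1,\dots,s_t\}$ by any fixed nontrivial element of $\widetilde N$), so the algorithm runs in $n^{O(1)}$ time. The main subtlety will be that $G/A$ is not presented as a permutation group of small degree, so one cannot directly run Sims-type algorithms inside $G/A$; the workaround---already used above---is that every generation question about $G/A$ reduces, after adjoining a generating set of $A$, to a generation question in the ambient group $G\le S_n$. The remaining minimality bookkeeping (trying size $t$ first and falling back to $t+1$ only when no $t$-element candidate works) is immediate from Theorem~\ref{bound-on-d(G)}.
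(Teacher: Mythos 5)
Your proposal follows the same route as the paper: identify $G/H$ with $(G/A)/\widetilde N$ (the paper writes down the isomorphism $gH\mapsto (gA)\widetilde N$ explicitly, you invoke the third isomorphism theorem), enumerate correction elements from $\widetilde N$ as prescribed by the Gasch\"utz--Lucchini lifting results (Theorems~\ref{Abelian-Theorem} and~\ref{Non-Abelian-Theorem}), and test each candidate by adjoining the generators of $A$ and running Schreier--Sims inside $G\leq S_n$; the abelian case is handled exactly as in the paper via the $tl+2$ candidates.

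There is, however, one genuine gap: you never treat the degenerate case $d(G/H)=0$, i.e.\ $H=G$, in which $\widetilde N$ is all of $G/A$ and hence $G/A$ is simple. Here the given generating set of $G/H$ is empty ($t=0$), and if $G/A$ is non-abelian the candidate shapes you enumerate collapse to single elements of $\widetilde N$ (the second shape in Theorem~\ref{Non-Abelian-Theorem} does not even make sense without a $g_1$), so no candidate generates; note also that your ``fall back to $t+1$'' bookkeeping via Theorem~\ref{bound-on-d(G)} breaks here, since that theorem only gives $d\leq\max(2,t+1)=2$, not $t+1=1$. The obvious repair---try all pairs of elements of $\widetilde N$---costs $|\widetilde N|^{2}n^{O(1)}$, which exceeds the claimed bound $|\widetilde N|^{t+1}n^{O(1)}=|\widetilde N|\,n^{O(1)}$ and is not absorbed by the polynomial factor, since $|\widetilde N|=|G/A|$ can be exponential in $n$. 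The paper closes this case with the Guralnick--Kantor result \cite{Kantor-simple-group} that every non-identity element of a finite non-abelian simple group lies in a generating pair: fix one non-identity $x\in G/A$ and try all $y$, which stays within $|\widetilde N|\,n^{O(1)}$ (the abelian $t=0$ case is harmless, as $G/A$ is then cyclic of prime order). This is not a vacuous corner: it is precisely the base case at which the recursions of Theorem~\ref{thm-quotient-general-group} and Lemma~\ref{Solvable-PG-Lemma} bottom out. Apart from this, your transversal computation and the generation test $\langle c_1,\dots,c_k,\mathrm{gens}(A)\rangle=G$ are sound and consistent with the paper's argument.
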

\begin{proof}
Note that $H= \langle h_1, \ldots,h_l, A \rangle$. From the correspondence theorem  \cite{rotman} it follows that $H$ is normal in $G$. By Jerrum's filter we may assume without loss of generality that $l\leq n-1$. Let $\widetilde{G}=G/A$ and $\widetilde{N}=H/A$. We first consider the case when $G \neq \{e\}$ and $d(G/H)=0$. In this case $H=G$ and $\widetilde{N}= \widetilde{G}$. Then a minimal normal subgroup of $\widetilde{G}$ is $\widetilde{G}$ itself. Thus, $\widetilde{G}$ is simple. If $\widetilde{G}$ is abelian then any non-identity element of $\widetilde{G}$ will form a minimum generating set of $\widetilde{G}$. If $\widetilde{G}$ is non-abelian then we pick a fixed non-identity element $x \in \widetilde{G}$ and try all $ y \in \widetilde{G}$. By the result of Guralnick and Kantor  \cite{Kantor-simple-group}, one of the choices of $y$ along with $x$ generates $\widetilde{G}$ and therefore form a minimum generating set of $\widetilde{G}=G/A$.

Now we assume $d(G/H)\neq 0$. Define $\phi: G/H \rightarrow \widetilde{G}/\widetilde{N}$ such that $\phi(gH)=(gA)\widetilde{N}$. One can check that $\phi$ is an isomorphism between $G/H$ and $\widetilde{G}/\widetilde{N}$. Therefore, if $\{ g_1H, g_2H,\ldots,g_t H \}$ is a minimum generating set of the group $G/H$, then $\{ (g_1A)\widetilde{N},\ldots,(g_tA)\widetilde{N} \}$ is a minimum generating set of $\widetilde{G}/\widetilde{N}$. Now, we find a minimum generating set of $\widetilde{G}$.

We know that $\widetilde{N}$ is a minimal normal subgroup of $\widetilde{G}$ and $d(\widetilde{G}/\widetilde{N})=t$ with $\widetilde{G}/\widetilde{N}=\langle (g_1A)\widetilde{N}, \ldots,(g_tA)\widetilde{N} \rangle$. By Theorem \ref{bound-on-d(G)} either $d(\widetilde{G})=t
$ or $d(\widetilde{G})=t+1$. If $d(\widetilde{G})=t$ then by Theorem \ref{Gaschütz-thorem}, there exists $x_{1}A,\ldots,x_{t}A \in \widetilde{N}$ such that $\widetilde{G}=\langle g_{1}x_{1}A,\ldots,g_{t}x_{t}A \rangle$. If $d(\widetilde{G})=t+1$ then by the proof of Theorem \ref{bound-on-d(G)}, there exists $x_{1}A,\ldots,x_{t}A,x_{t+1}A \in \widetilde{N}$ such that either $\widetilde{G}=\langle g_{1}x_{1}A,\ldots,g_{t}x_{t}A,x_{t+1}A\rangle$ or $\widetilde{G}=\langle x_{1}g_{1}A,g_{2}A,\ldots,g_{t}A,x_{2}A\rangle$. Thus it takes at most $|\widetilde{N}|^{t+1} n^{O(1)}$ time to find a minimum size generating set of $\widetilde{G}$. 

Assume that $\widetilde{N}$ is abelian. In this case if $d(\widetilde{G})=t$ then by Theorem \ref{Generalization-of-lucchini-lemma} and Theorem \ref{Abelian-Theorem} either $\{ g_{1}A,\ldots,g_{t}A\}$ generates $\widetilde{G}$ or there exists $1 \leq i \leq t$ and $1 \leq j \leq l$ such that $\{ g_{1}A,\ldots,g_{i-1}A,g_{i}h_{j}A,g_{i+1}A,\ldots,g_{t}A\}$ is a generating set of $\widetilde{G}$. If $d(\widetilde{G})=t+1$ then by Theorem \ref{Abelian-Theorem}, we know that for any $1 \neq xA \in \widetilde{N}$ then $\{ g_{1}A,\ldots,g_{t}A,xA\}$ is a generating set of $\widetilde{G}$ of minimum size. Notice that, $l,t \leq (n-1)$. Thus we can find a generating set of $\widetilde{G}$ of minimum size by checking all possible such sets which takes at most $n^{O(1)}$ time. Thus, in this case we can find a generating set of $\widetilde{G}$ in polynomial time.
\end{proof}

The above proof can be used to prove an analogous theorem for groups given by their Cayley table.

\begin{lemma}\label{Gen-Set-Quotient-Cayley}
Let $G$ be a group of order $n$ given by its Cayley table and $A \trianglelefteq G$. Let $ \widetilde{N}=\langle h_1A,\ldots,h_lA \rangle$ be a minimal normal subgroup of $G/A$.  Let $H$ be the subgroup of $G$ such that $H/A=\widetilde{N}$. Then given a minimum generating set of $G/H$, a minimum generating set of $G/A$ can be found in time $|\widetilde{N}|^{t+1} n^{O(1)}$, where $t=d(G/H)$. Moreover, if $\widetilde{N}$ is abelian then computing a minimum generating set of $G/A$ from a minimum generating set of $G/H$ takes $n^{O(1)}$ time.
\end{lemma}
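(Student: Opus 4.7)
The plan is to follow the proof of Lemma~\ref{Gen-Set-Quotient} essentially verbatim, since the argument there is algebraic and only invokes three kinds of computational primitives on the underlying group: forming a quotient by a normal subgroup, enumerating the elements of a subgroup, and testing whether a candidate set generates a target group. All three are polynomial-time in $n=|G|$ when $G$ is given by its Cayley table: the quotient $G/A$ comes equipped with a Cayley table on coset representatives, the elements of $\widetilde N = H/A$ can be listed by taking products of $h_1,\ldots,h_l$ (and $A$) via the table, and ``does $S$ generate $K$?'' is decided by the standard orbit-style closure computation on a Cayley table. Thus I would reuse all the group-theoretic steps of the earlier proof and only substitute these Cayley-table subroutines in place of the permutation-group ones.

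Concretely, by the correspondence theorem $H = \langle h_1,\dots,h_l,A\rangle$ is normal in $G$, and the map $\phi\colon G/H \to \widetilde G/\widetilde N$, $gH \mapsto (gA)\widetilde N$, is an isomorphism, so a minimum generating set $\{g_1H,\dots,g_tH\}$ of $G/H$ yields a minimum generating set $\{(g_1A)\widetilde N,\dots,(g_tA)\widetilde N\}$ of $\widetilde G/\widetilde N$. When $d(G/H)=0$, $\widetilde G = \widetilde N$ is simple, handled as in Lemma~\ref{Gen-Set-Quotient}: any nonidentity element works in the abelian case, and in the non-abelian case we fix $x \neq e$ and try each $y \in \widetilde G$ using the Guralnick--Kantor theorem~\cite{Kantor-simple-group}, in polynomial time via the Cayley table. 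When $d(G/H)=t\geq 1$, Theorem~\ref{bound-on-d(G)} gives $d(\widetilde G) \in \{t,t+1\}$, and by Theorem~\ref{Gaschütz-thorem} together with the proof of Theorem~\ref{bound-on-d(G)} there exist $x_1A,\dots,x_{t+1}A \in \widetilde N$ realising one of the three explicit patterns $\langle g_1x_1A,\dots,g_tx_tA\rangle$, $\langle g_1x_1A,\dots,g_tx_tA,x_{t+1}A\rangle$, or $\langle x_1g_1A,g_2A,\dots,g_tA,x_2A\rangle$ as $\widetilde G$. I would simply enumerate all tuples in $\widetilde N^{t+1}$ and, for each, test generation against $\widetilde G$ by Cayley-table closure; this costs $|\widetilde N|^{t+1} n^{O(1)}$.

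For the abelian case, I would again mirror the earlier proof: Theorems~\ref{Generalization-of-lucchini-lemma} and \ref{Abelian-Theorem} reduce the search for $d(\widetilde G)=t$ to testing $\{g_1A,\dots,g_tA\}$ and the $tl$ modifications $\{g_1A,\dots,g_{i-1}A,g_ih_jA,g_{i+1}A,\dots,g_tA\}$, and for $d(\widetilde G)=t+1$ to appending any nonidentity $xA \in \widetilde N$. Since $t \leq \log_p n$ and $l$ can be taken at most $\log_p n$ (every subgroup of order at most $n$ has a generating set of this size, so we may thin the given generators of $\widetilde N$ in polynomial time), the total number of candidates is $n^{O(1)}$ and each test is $n^{O(1)}$, giving the claimed polynomial bound. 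I do not anticipate any genuine obstacle here: the group-theoretic content has already been established by the quoted theorems, and the only thing to verify is that each primitive operation remains polynomial in the Cayley-table model, which is routine.
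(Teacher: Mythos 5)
Your proposal is correct and is essentially the paper's own proof: the paper simply states that the argument of Lemma~\ref{Gen-Set-Quotient} carries over to the Cayley-table setting, which is exactly what you do, checking that the quotient construction, enumeration of $\widetilde{N}$, and generation tests are polynomial-time on a Cayley table. Your additional remarks (thinning the generators of $\widetilde{N}$, bounding $t$ by $\log_p n$) are harmless refinements of the same argument.
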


\section{General Groups in the Cayley Table Representation}\label{MIN-GEN-General-Groups}

In this section, we design an algorithm for finding a minimum generating set of any group given by its Cayley table representation. Recall that in this Cayley table representation of a group, $n$ is the order of a group.


Notice that by trying all possible sets, it is easy to obtain an $n^{\log_{p} n+O(1)}$ time algorithm for the MIN-GEN problem. We present a significantly improved algorithm for the minimum generating set problem for general groups and prove that there is an $n^{(1/4) \log_{p} n+O(1)}$ time algorithm that finds a minimum generating set of an input group given by its Cayley table. 

\begin{lemma}\label{bounds on total gene set-GeneralGroup}
Let $G$ be a group of order $n$, and let $N$ be a minimal normal subgroup of $G$. Suppose that $d(G/N)=t$ and $G/N = \langle g_{1}N,  \ldots,g_{t}N \rangle$. Then $|N|^{t+1} \leq n^{(1/4) \log_{p} n+1}$.
\end{lemma}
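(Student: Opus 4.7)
The plan is to take logarithms base $p$ on both sides of the desired inequality and reduce everything to a quadratic inequality in two real variables, which can then be dispatched by a completing-the-square argument. Setting $a := \log_p |N|$ and $b := \log_p n$, the target bound $|N|^{t+1} \leq n^{(1/4)\log_p n + 1}$ is equivalent to
\[
(t+1)\,a \;\leq\; \tfrac{1}{4}b^2 + b.
\]
So the whole proof reduces to controlling the left-hand side, namely controlling both $t$ and $a$ in terms of $b$.

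First, I would bound $t$. The standard fact (the same one that powers the $n^{\log_p n + O(1)}$ generator-enumerator algorithm quoted at the start of Section~\ref{MIN-GEN-General-Groups}) is that any finite group of order $k$ admits a generating set of size at most $\log_q k$, where $q$ is the smallest prime dividing $k$; this is because each successively added generator enlarges the subgroup by at least a factor equal to the smallest prime divisor. Applying this to $G/N$, whose order is $n/|N|$ and whose smallest prime divisor is at least $p$, gives
\[
t \;=\; d(G/N) \;\leq\; \log_p(n/|N|) \;=\; b - a.
\]
Hence $(t+1)a \leq (b-a+1)a$, and it remains to show $(b-a+1)a \leq \tfrac{1}{4}b^2 + b$.

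The final step is purely algebraic. Rearranging, the inequality $(b-a+1)a \leq \tfrac{1}{4}b^2 + b$ is equivalent to
\[
\bigl(a - \tfrac{b}{2}\bigr)^2 + (b - a) \;\geq\; 0.
\]
Both summands are nonnegative: the first is a square, and the second is nonnegative because $|N| \leq n$ forces $a \leq b$. This finishes the argument. I do not expect any real obstacle here; the only thing to be slightly careful about is the degenerate case $N = G$ (where $t = 0$ and $a = b$, and the inequality becomes $b \leq \tfrac{1}{4}b^2 + b$, trivially true), and the case where $G/N$ is trivial, both of which are handled transparently by the same log-form inequality. The whole content of the lemma is essentially the identity $-a^2 + ab = -(a - b/2)^2 + b^2/4$ combined with the generator-count bound for $G/N$.
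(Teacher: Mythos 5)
Your proof is correct and follows essentially the same route as the paper: both arguments bound $t=d(G/N)$ by $\log_p|G/N|$ and then maximize a quadratic, the paper via writing $|N|=n^{\epsilon}$ and using $\epsilon(1-\epsilon)\leq 1/4$, you via the equivalent completed square $\bigl(a-\tfrac{b}{2}\bigr)^2+(b-a)\geq 0$. No gaps; the degenerate cases you flag are handled the same way in the paper's formulation.
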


\begin{proof} Since $N$ is a minimal normal subgroup of $G$, we have $|N|=n^{\epsilon}$ for some $0 < \epsilon \leq 1$ (since every minimal normal subgroup is nontrivial subgroup). Thus $|G/N|= n^{1-\epsilon}$ and $\log_{p} |G/N| = (1-\epsilon) \log_{p} n$. Therefore, we get $t=d(G/N) \leq (1-\epsilon) \log_{p} n$. Then $|N|^{t+1} \leq n^{\epsilon (1-\epsilon) \log_{p} n+1}$. Since $0 < \epsilon \leq 1$, the maximum possible value of $\epsilon (1-\epsilon)$ can be at most $1/4$. Hence, we get $|N|^{t+1} \leq  n^{(1/4) \log_{p} n+1}$.
\end{proof}

\begin{theorem}\label{thm-quotient-general-group}
Let $G$ be a group of order $n$ given by its Cayley table and let $A \trianglelefteq G$. Then a minimum generating set of $G/A$ can be found in $ n^{(1/4) \log_{p} n+ O(1)}$ time.
\end{theorem}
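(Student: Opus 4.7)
The plan is to apply Lemma~\ref{Gen-Set-Quotient-Cayley} iteratively along a chief series of $G/A$, growing a minimum generating set of a known quotient from the trivial quotient up to $G/A$.

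First I would compute a chain of normal subgroups of $G$
\[
A = H_m \trianglelefteq H_{m-1} \trianglelefteq \cdots \trianglelefteq H_0 = G,
\]
with each $H_i \trianglelefteq G$ and each factor $H_{i-1}/H_i$ a minimal normal subgroup of $G/H_i$; such a chain is precisely a chief series of $G/A$ lifted to $G$. It can be built from the bottom up in time polynomial in $n$: given $H_i$, form the Cayley table of $G/H_i$, locate a minimal normal subgroup by computing normal closures of each nontrivial element of $G/H_i$ and picking one of minimum order, and lift it back to obtain $H_{i-1}$. The length satisfies $m \leq \log_2(n/|A|) \leq \log_2 n$, since every chief factor is nontrivial.

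Next I would inductively construct a minimum generating set of $G/H_i$ for $i = m, m-1, \dots, 0$ of decreasing $H_i$, i.e., processing in the order $G/H_0, G/H_1, \dots, G/H_m$. The base case is $G/H_0 = \{e\}$ with the empty generating set. At step $i \geq 1$, Lemma~\ref{Gen-Set-Quotient-Cayley}, applied with the lemma's $A$ taken to be $H_i$ and its $H$ taken to be $H_{i-1}$, converts a minimum generating set of $G/H_{i-1}$ into one of $G/H_i$ in time $|H_{i-1}/H_i|^{d(G/H_{i-1})+1} \cdot n^{O(1)}$. After $m$ steps this yields a minimum generating set of $G/H_m = G/A$.

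The runtime analysis rests on Lemma~\ref{bounds on total gene set-GeneralGroup} applied to $G/H_i$ (of order $n_i := n/|H_i|$) together with the minimal normal subgroup $H_{i-1}/H_i$. Writing $p_i$ for the smallest prime dividing $n_i$, the lemma gives
\[
|H_{i-1}/H_i|^{d(G/H_{i-1})+1} \leq n_i^{(1/4)\log_{p_i} n_i + 1}.
\]
Since every prime dividing $n_i$ also divides $n$, we have $p_i \geq p$, and thus $\log_{p_i} n_i \leq \log_p n$; combined with $n_i \leq n$ this yields a uniform bound of $n^{(1/4)\log_p n + O(1)}$ per step. Multiplying by the $m \leq \log_2 n$ iterations and the polynomial overhead for building the quotient Cayley tables gives the overall bound $n^{(1/4)\log_p n + O(1)}$.

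The main technical point is not the structure of the induction, which is a routine chaining of Lemma~\ref{Gen-Set-Quotient-Cayley} along a chief series, but ensuring that the exponent from Lemma~\ref{bounds on total gene set-GeneralGroup} does not degrade as we climb the chain. This relies on the simple monotonicity $p_i \geq p$ and $n_i \leq n$, which guarantees that the worst-case cost at every layer is dominated by $n^{(1/4)\log_p n + O(1)}$, so the geometric blow-up of generating-set candidates is absorbed into the claimed runtime rather than being amplified by the $\log n$ layers of the series.
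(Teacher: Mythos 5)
Your proposal is correct and follows essentially the same route as the paper: the paper's proof is the same construction phrased as a recursion (find a minimal normal subgroup $\widetilde{N}=H/A$ of $G/A$, recurse on $G/H$, extend via Lemma~\ref{Gen-Set-Quotient-Cayley}, bound each step by Lemma~\ref{bounds on total gene set-GeneralGroup}, with $O(\log n)$ levels), whereas you merely unroll that recursion into an explicit chief series of $G/A$. Your explicit justification that the per-step bound does not degrade (since $n_i\leq n$ and $p_i\geq p$) is a point the paper leaves implicit, but it is the same argument.
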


\begin{proof}
If $G/A=\{e \}$ then the problem is trivial. We first compute a minimal normal subgroup $\widetilde{N}$ of $G/A$. This could be done in polynomial time \cite{ronyai-minimal-normal-algo,Ako-Seress}. Notice that, $\widetilde{N}$ will be of the form $H/A$, where $H =\langle h_1, \ldots, h_l, A \rangle$ is a normal subgroup of $G$. Next we recursively find a minimum generating set of $G/H$. Using Lemma \ref{Gen-Set-Quotient-Cayley}, a minimum generating set of $G/A$ could be found in time $|\widetilde{N}|^{t+1} n^{O(1)}$, where $t=d(G/H)$. From Lemma \ref{bounds on total gene set-GeneralGroup} we have, $|\widetilde{N}|^{t+1} n^{O(1)} \leq n^{(1/4) \log_{p} n+ O(1)}$. 

Notice that in each recursive call the size of $G/A$ reduces by at least half. Thus the number of recursive call is $O(\log n)$. Therefore, the total running time of the algorithm is $ n^{(1/4) \log_{p} n+ O(1)}$.
\end{proof}

\begin{theorem}
Let $G$ be a group of order $n$ given by its Cayley table. Then a minimum generating set of $G$ can be found in $ n^{(1/4) \log_{p} n+ O(1)}$ time.
\end{theorem}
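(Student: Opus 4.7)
The plan is to derive this theorem as an immediate corollary of Theorem~\ref{thm-quotient-general-group}. Since the trivial subgroup $A=\{e\}$ is always normal in $G$, and since the quotient $G/\{e\}$ is canonically isomorphic to $G$ (via the identification $g\{e\}\mapsto g$), any minimum generating set of $G/\{e\}$ produced by that algorithm directly yields a minimum generating set of $G$ of the same size. I would simply invoke Theorem~\ref{thm-quotient-general-group} with this choice of $A$, and the claimed runtime $n^{(1/4)\log_p n + O(1)}$ transfers verbatim.

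The only small subtlety to confirm is that no preprocessing step becomes expensive when $A=\{e\}$: computing a minimal normal subgroup of $G/A = G$ is still polynomial time by the cited routines (Ronyai, Kantor--Seress), and the recursion still terminates after $O(\log n)$ steps since at each level the current quotient has its order divided by at least $|\widetilde{N}|\geq 2$. Lemma~\ref{bounds on total gene set-GeneralGroup} already bounds $|\widetilde{N}|^{t+1}$ by $n^{(1/4)\log_p n + 1}$ at every level of the recursion when $\widetilde{N}$ is a minimal normal subgroup of the current quotient, so the per-level cost respects the target bound independently of how deep we are in the recursion.

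There is essentially no obstacle here; the main theorem was set up precisely so that Theorem~\ref{thm-quotient-general-group} specializes to it. The only care needed is in phrasing: I would explicitly note that the isomorphism $G\cong G/\{e\}$ preserves minimum generating set size, so the output of the quotient algorithm is returned (after pulling back through this identification) as the desired minimum generating set of $G$.
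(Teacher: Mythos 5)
Your proposal is correct and matches the paper's proof exactly: the paper also obtains this theorem by simply setting $A=\{e\}$ in Theorem~\ref{thm-quotient-general-group}. The extra remarks about the identification $G\cong G/\{e\}$ and the recursion depth are harmless elaborations of the same one-line argument.
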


\begin{proof}
Take $A=\{e\}$ in Theorem \ref{thm-quotient-general-group}.
\end{proof}

\section{MIN-GEN for Product of Simple Groups and its Consequences}\label{MIN-GEN-Product-Simple-Groups}

In this section, we consider groups that are  direct products of non-abelian simple groups. The isomorphism of such groups could be checked in polynomial time \cite{Kayal-Neeraj,Wilson-complement-algo-PG,babai-groupIso}. We mention that the decomposition of a group into (indecomposable) direct factors is known to be computable in polynomial time even if the direct factors are not simple \cite{Kayal-Neeraj,Wilson-complement-algo-PG}. 

Let $ \mathcal{G}_{\Pi simp}=\{ G \,|\, G \text{ is a direct product of simple groups} \}$. We show that a minimum generating set of a group $G \in \mathcal{G}_{\Pi simp}$ can be found in polynomial time.

\begin{theorem}\label{product-simple-groups-proof}
Let $G \in \mathcal{G}_{\Pi simp}$ be given by its Cayley table. Then a minimum generating set of $G$ can be computed in $n^{O(1)}$ time, where $n= |G|$.
\end{theorem}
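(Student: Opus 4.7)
The approach is to combine the structure of direct products of simple groups with a classical counting formula of Philip Hall, together with the polynomial-time Remak decomposition that is available for Cayley-table input.

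First, I would invoke the polynomial-time Remak decomposition and simple-factor isomorphism algorithms referenced in the paper to write
\[ G \;\cong\; T_1^{k_1} \times T_2^{k_2} \times \cdots \times T_r^{k_r}, \]
where the $T_i$ are pairwise non-isomorphic finite simple groups (each either cyclic of prime order or non-abelian simple). Since $T_i \not\cong T_j$ for $i \ne j$ share no simple composition factor, a Goursat-lemma argument shows that any subgroup of $G$ that surjects onto each factor $T_i^{k_i}$ must be all of $G$. Consequently
\[ d(G) \;=\; \max_{1 \le i \le r}\, d(T_i^{k_i}), \]
and an explicit minimum generating set of $G$ is assembled by taking, for each $i$, a minimum generating $(T_i^{k_i})$-tuple, padding it to length $d(G)$ by identity entries, and then grouping the $i$-th components column-wise to form a single $d(G)$-tuple of elements of $G$.

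The next task is to compute $d(T^k)$ and an explicit generating set of that size for each factor type. If $T = \mathbb{Z}_p$ is abelian simple, then $T^k$ is elementary abelian of rank $k$ and the standard basis is a minimum generating set of size $k$. If $T$ is non-abelian simple, Philip Hall's formula gives
\[ d(T^k) \;=\; \min\Bigl\{\, d \ge 2 \;:\; \phi_d(T)/|\mathrm{Aut}(T)| \;\ge\; k \,\Bigr\}, \]
where $\phi_d(T)$ is the number of ordered generating $d$-tuples of $T$; moreover, $d$ generators of $T^k$ are obtained by choosing $k$ generating $d$-tuples of $T$ from pairwise distinct $\mathrm{Aut}(T)$-orbits and laying them out as the columns of a $d \times k$ array whose rows form the desired generators (this is precisely the phenomenon that drives the examples $d(A_5^{19}) = 2$ and $d(A_5^{6464040}) = 5$ cited in the introduction). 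Since $|T| \ge 60$, $k \le n/|T|$, and $|\mathrm{Aut}(T)| \ge |T|$, the required $d$ satisfies $d = O(\log_{|T|} n)$, so enumerating the $|T|^d = O(n)$ candidate $d$-tuples and filtering for generation is polynomial in $n$. Computing $|\mathrm{Aut}(T)|$ is likewise polynomial: since $d(T) = 2$, an automorphism is determined by the images of a fixed generating pair, so brute-forcing over all $|T|^2$ such image pairs and verifying the Cayley table suffices.

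The main obstacle I anticipate is the orbit-selection step inside Hall's formula, namely producing $k$ representative generating $d$-tuples of $T$ from distinct $\mathrm{Aut}(T)$-orbits. However, the diagonal action of $\mathrm{Aut}(T)$ on generating tuples is free, so each orbit has exactly $|\mathrm{Aut}(T)|$ elements and the orbits are straightforward to enumerate by repeatedly picking an unused generating tuple and striking off its entire orbit. Since $\mathrm{Aut}(T)$ has already been built explicitly, this runs in $|\mathrm{Aut}(T)| \cdot |T|^d = \mathrm{poly}(n)$ time. All remaining ingredients---Remak decomposition, simple-factor isomorphism, recognizing abelian simple factors, and assembling generators across the $T_i^{k_i}$ via the Goursat step---are polynomial by results referenced earlier in the paper, completing the plan.
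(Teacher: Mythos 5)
Your proposal is correct, but it takes a genuinely different route from the paper. The paper never groups isomorphic factors or invokes P.~Hall's Eulerian-function formula: it orders the simple factors so that $|S_1|\geq\cdots\geq|S_r|$ and builds a minimum generating set in $r$ stages, treating $S_i$ as a minimal normal subgroup of $S_1\times\cdots\times S_i$ and extending the previously computed generating set via Theorem~\ref{Abelian-Theorem} or Theorem~\ref{Non-Abelian-Theorem}; the only quantitative point is that the brute-force search in stage $i$ tries at most $|S_i|^{t+1}\leq|S_i|^{i+1}\leq\bigl(\prod_{j\leq i}|S_j|\bigr)^2\leq n^2$ sets, which the descending ordering guarantees. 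Your route instead reduces, via the no-common-composition-factor/Goursat argument, to the identity $d(G)=\max_i d(T_i^{k_i})$, and then handles each isotypic block $T^k$ by Hall's formula $d(T^k)=\min\{d\geq 2: \phi_d(T)/|\mathrm{Aut}(T)|\geq k\}$ together with the column criterion (each column generates $T$, columns pairwise $\mathrm{Aut}(T)$-inequivalent), computing $\mathrm{Aut}(T)$ by brute force over images of a generating pair. All of these steps are sound, and the orbit-counting bound $\phi_d(T)\geq|\mathrm{Aut}(T)|\,|T|^{d-2}$ does give $d\leq 2+\lceil\log_{|T|}k\rceil$; note only that your claim $|T|^d=O(n)$ is slightly off (one gets roughly $|T|^d\leq k|T|^3\leq n^{O(1)}$), which does not affect polynomiality. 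What each approach buys: yours yields an explicit closed-form for $d(G)$ and isolates the combinatorics of powers of a single simple group, at the cost of importing Hall's 1936 theorem and an explicit $\mathrm{Aut}(T)$ computation; the paper's stage-wise scheme is self-contained given its extension theorems, needs no automorphism groups or isomorphism-type grouping, and is the same template it reuses elsewhere (e.g., along chief series in Section~\ref{Min-Gen-Bounded-non-abelain-chief-factor}), with the ordering trick replacing your counting argument.
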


\begin{proof}
Let $G=S_1 \times \cdots \times S_r$ where $S_i$'s are simple groups. The decomposition can be computed in polynomial time \cite{Kayal-Neeraj}. We design an algorithm that works in $r$ stages. In the $i$th stage it computes a minimum generating set of $S_1 \times  \cdots \times S_{i}$. Note that $S_i$ is a minimal normal subgroup of $S_1 \times  \cdots \times S_{i}$. Therefore, we can apply Theorem \ref{Abelian-Theorem} if $S_i$ is abelian or Theorem \ref{Non-Abelian-Theorem} if $S_i$ is non-abelian. An application of Theorem \ref{Non-Abelian-Theorem} corresponds to trying $|S_{i}|^{t+1}$ many subsets, where $t=d(S_1 \times  \cdots \times S_{i-1})$. How do we ensure that $|S_{i}|^{t+1}$ is a polynomial in $n$ ? The idea is to reorder the simple groups such that $|S_1| \geq \cdots \geq |S_r|$. Note that $d(S_1) \leq 2$ as any simple groups can be generated by at most $2$ elements. Observe that in each stage the size of newly computed minimum generating set increases by at most $1$ (see Theorem \ref{bound-on-d(G)}). Therefore, $t=d(S_1 \times  \cdots \times S_{i-1}) \leq i$ for $i>1$.

Computing a minimum generating set of $S_1$ takes time $|S_1|^2 n^{O(1)}$. Fix $i > 1$, 
\begin{align*}
|S_i|^{i-1} & \leq \prod_{j=1}^{i-1} |S_j|\\
|S_i|^{i+1} & \leq |S_i| \prod_{j=1}^{i} |S_j| \leq \Big(\prod_{j=1}^{i} |S_j|\Big)^2 \leq n^2.
\end{align*}

Thus the maximum number of subsets tried in stage $i$ is $|S_{i}|^{t+1} \leq |S_{i}|^{i+1} \leq n^2$. As $r \leq \log n$ the runtime of the algorithm is $n^{O(1)}$.
\end{proof}



As a consequence of Theorem \ref{product-simple-groups-proof}, we show how to solve the following problem: Given a group $G$ by its Cayley table along with subgroups $G_1 $ and $G_2$ such that $G=G_1 \times G_2$ and $d(G_1)$ and $d(G_2)$ find $d(G_1 \times G_2)$.

We note that $d(G)$ does not solely depend on \emph{just the numbers} $d(G_1)$  and $d(G_2)$. In fact, the example given in Subsection \ref{Product-Simple-Intro} for $A_5$, $A_5^{19}$ and $A_5^{6464040}$ shows that the relation between $d(G)$ and the factors of $G$ could be intricate.

\begin{theorem}\label{product-of-groups}
    Let $G=G_1 \times \ldots \times G_r$ be a group given by its Cayley table along with $d(G_i)$ for each $i \in [r]$. Then $d(G)$ can be computed in polynomial time.
\end{theorem}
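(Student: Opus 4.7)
The plan is to proceed by induction on $r$. The base case $r=1$ is immediate since $d(G)=d(G_1)$ is given. For the inductive step, set $A = G_1 \times \cdots \times G_{r-1}$ and $B = G_r$. By the induction hypothesis, $d(A)$ can be computed from $d(G_1),\ldots,d(G_{r-1})$ in polynomial time, and together with the given value $d(B)=d(G_r)$ the remaining task is to compute $d(A\times B)$ from $d(A)$, $d(B)$, and the Cayley tables of $A,B$.

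To compute $d(A\times B)$, I would analyze the crown structure of $A\times B$. First, compute chief series of $A$ and of $B$ in polynomial time by repeatedly extracting minimal normal subgroups, a standard polynomial-time subroutine in the Cayley-table model. For each chief factor isomorphism type $F$, together with its $G$-action equivalence class, identify the multiplicities of $F$ in $A$ and in $B$ and form the corresponding ``$F$-crown'' of $A\times B$. For a non-abelian $F$, the associated crown is a direct product of isomorphic non-abelian simple groups, and its minimum generating set size is computable in polynomial time by Theorem~\ref{product-simple-groups-proof}. For an abelian $F$, the crown is an $\mathbb{F}_p$-module whose $d$-value is obtained by linear algebra.

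The value $d(A\times B)$ is then assembled from the crown data together with $d(A)$ and $d(B)$ via the crown-based characterization of $d$ derived from Theorems~\ref{Abelian-Theorem} and~\ref{Non-Abelian-Theorem}. Concretely, $d(A\times B)$ is obtained as the maximum of $d(A)$, $d(B)$, and, ranging over chief factor types $F$, the number $\delta_F$ of generators forced by the $F$-crown of $A\times B$. Each $\delta_F$ is computable in polynomial time as indicated, and since there are $O(\log|G|)$ distinct crowns, the overall per-pair computation is polynomial.

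The main obstacle is verifying this max-characterization: one must argue that chief factors from $A$ and from $B$ combine into a single crown only when their $G$-action classes agree, and that the generator counts imposed by distinct crowns are independent and do not interact with the contributions from $d(A)$ and $d(B)$. This is where the Gasch\"utz--Lucchini theory, together with a careful invocation of Theorem~\ref{product-simple-groups-proof} to handle each non-abelian crown, does the essential work. Once the per-pair step is justified, the outer induction on $r$ contributes only a factor of $r$, and the total running time is polynomial in $|G|$ as claimed.
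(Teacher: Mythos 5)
Your reduction to the two-factor case is the same routine step the paper takes ($r=2$ suffices), but the heart of your argument is missing. The identity you need, $d(A\times B)=\max\{d(A),d(B),\max_F \delta_F\}$, is exactly the non-trivial content of the theorem, and you explicitly defer it ("the main obstacle") to unspecified Gasch\"utz--Lucchini machinery without proving it. The paper does not perform any crown analysis at all: it invokes a specific result of Collins stating that $d(G_1\times G_2)=\max\{d(G_1),d(G_2),d\bigl((G_1\times G_2)/J\bigr)\}$, where $J$ is the intersection of all maximal normal subgroups of $G_1\times G_2$, that $J$ is computable in polynomial time, and that $G/J$ is a direct product of simple groups; Theorem~\ref{product-simple-groups-proof} is then applied to that single quotient. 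So the one step you leave open is precisely the step the paper resolves by citation, and without it your proof does not establish the theorem.

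Moreover, the per-crown quantities you propose are not the right invariants, so the gap cannot be closed as written. For an abelian chief factor $F$ you take $\delta_F$ to be the $\mathbb{F}_p$-rank of the crown "by linear algebra," but the number of generators forced by an abelian crown depends on the $G$-action, not on the rank: take $A=A_5\ltimes C_2^4$ with the irreducible nontrivial $4$-dimensional module, a perfect $2$-generated group; then $A\times A$ is still $2$-generated, yet its abelian crowns have $\mathbb{F}_2$-rank $4$, so your maximum would overshoot $d(A\times A)$. Similarly, for non-abelian $F$ the generation requirement is governed by the crown-based power of the associated monolithic primitive group, not by $d$ of the crown itself, and Theorem~\ref{product-simple-groups-proof} applies to a group that \emph{is} a product of simple groups (such as the quotient $G/J$ in the paper's proof), not to a crown sitting inside $G$ with a nontrivial action. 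Correcting both issues essentially amounts to reproving Collins-type results, which is the work your proposal leaves undone.
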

\begin{proof}
It is enough to prove the theorem for $r=2$. Let $J$ be the intersection of all maximal normal subgroups of the given group $G$. The subgroup $J$ can be computed in polynomial time even for permutation group \cite{Ako-Seress}. Collins prove that the quotient $G/J$ is a product of simple groups (see e.g., p. 16 \cite{collins-thesis}). They also showed that $d(G_1 \times G_2)=\max\{d(G_1), d(G_2), d(G_1 \times G_2/J)\}$. 
Now it is clear from Theorem \ref{product-simple-groups-proof}, that $d(G_1 \times G_2)$ can be computed in polynomial time.
\end{proof}




\begin{corollary}
    Given a group $G$ by its Cayley table the MIN-GEN problem polynomial time Turing reduces to the MIN-GEN problem for indecomposable groups. 
\end{corollary}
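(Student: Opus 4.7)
The plan is to combine three ingredients already available in the paper: the Remak--Krull--Schmidt decomposition (computable in polynomial time), the oracle that decides MIN-GEN on indecomposable groups, and Theorem~\ref{product-of-groups} which turns the numbers $d(G_i)$ into $d(G_1 \times \cdots \times G_r)$ in polynomial time.

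First, I would use the polynomial-time algorithm of Kayal--Neeraj (and Wilson) cited in Section~\ref{MIN-GEN-Product-Simple-Groups} to compute an internal decomposition $G = G_1 \times G_2 \times \cdots \times G_r$ with each $G_i$ indecomposable. Since every $|G_i| \geq 2$, we have $r \leq \log_2 n$. For each factor $G_i$ I would then extract the exact value $d(G_i)$ using the MIN-GEN oracle for indecomposable groups. Because $d(G_i) \leq \log_2 |G_i| \leq \log_2 n$, binary search on $k$ against the oracle for the instance $(G_i, k)$ determines $d(G_i)$ with $O(\log \log n)$ queries per factor, so $O(\log n \cdot \log \log n)$ oracle queries suffice in total.

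With the numbers $d(G_1), \ldots, d(G_r)$ in hand, together with the already-computed decomposition of $G$, the hypotheses of Theorem~\ref{product-of-groups} are met and $d(G)$ can be computed in polynomial time. To solve the decision version MIN-GEN on input $(G,k)$, I would simply compare $d(G)$ with $k$. This yields a polynomial-time Turing reduction from MIN-GEN on arbitrary Cayley-table groups to MIN-GEN on indecomposable groups.

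The only genuine subtlety is converting decision-oracle answers to the exact value $d(G_i)$; this is handled cleanly by the binary-search trick above, exploiting the $O(\log n)$ bound on $d(G_i)$. Everything else is a direct assembly of Theorem~\ref{product-of-groups} with the cited polynomial-time Remak--Krull--Schmidt decomposition, so no new group-theoretic argument is needed.
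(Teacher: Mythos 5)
Your proposal is correct and follows essentially the same route as the paper: compute the indecomposable (Remak--Krull--Schmidt) factors in polynomial time, recover each $d(G_i)$ from the decision oracle with $O(\log\log n)$ queries via binary search, and then invoke Theorem~\ref{product-of-groups} to obtain $d(G)$ and answer the decision question. The only difference is that you spell out the binary-search step, which the paper leaves as a remark.
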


\begin{proof}
    It is not hard to prove that given an oracle for MIN-GEN, we can find $d(G)$ of a given group $G$ by querying MIN-GEN multiple times ($O(\log \log n)$ queries are enough).

    The first step in the reduction is to find all the   indecomposable factors of the input group $G$ \cite{Kayal-Neeraj,Wilson-complement-algo-PG}. Let $G_1, G_2, \ldots, G_r$ be the indecomposable factors. Using the oracle for MIN-GEN problem for indecomposable groups we can find $d(G_i)$ for $i \in [r]$. Now, the corollary follows from Theorem \ref{product-of-groups}.
\end{proof}

\section{Solvable Permutation Groups}\label{MIN-GENProblem-Solvable-PG}
In this section, we design a simple recursive polynomial-time algorithm to compute a minimum generating set of solvable permutation groups based on the ideas by Gasch\"utz \cite{gaschutz1956}, and Lucchini and Menegazzo \cite{lucchini-minimal-solvable}. 

 
For all permutation group problems, an input group $G \leq S_n$ is given by a generating set of $G$. Recall that $n$ is now the degree of a group $G$.  

\begin{lemma}\label{Solvable-PG-Lemma}
Let $G \leq S_{n}$ be a solvable group and $A$ be a normal subgroup of $G$ given by their generating sets. Then a minimum generating set of $G/A$ could be computed in polynomial time.
\end{lemma}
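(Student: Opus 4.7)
The plan is to adapt the recursive strategy of Theorem \ref{thm-quotient-general-group} to the solvable setting, exploiting the fact that solvability forces every minimal normal subgroup of every quotient of $G$ to be elementary abelian. This means that at every level of the recursion, Lemma \ref{Gen-Set-Quotient} will be invoked in its polynomial-time ``moreover'' clause rather than in the more expensive $|\widetilde{N}|^{t+1} n^{O(1)}$ regime.

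Concretely, I would proceed as follows. If $G/A$ is trivial, output the empty set. Otherwise, use the polynomial-time algorithms of \cite{ronyai-minimal-normal-algo,Ako-Seress} to compute a minimal normal subgroup $\widetilde{N}$ of $G/A$. By the correspondence theorem, $\widetilde{N} = H/A$ for some $H \trianglelefteq G$, and generators of $\widetilde{N}$ lift to elements $h_1,\dots,h_l$ of $G$ which, together with the given generators of $A$, form a generating set of $H$. Then recursively compute a minimum generating set of $G/H$, and apply Lemma \ref{Gen-Set-Quotient} to lift it to a minimum generating set of $G/A$. Since $G$ is solvable, the quotient $G/A$ is solvable, and therefore its minimal normal subgroup $\widetilde{N}$ is abelian; the lifting step accordingly runs in $n^{O(1)}$ time.

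For the runtime, each recursive call strictly reduces the order of the current quotient by a factor of at least $|\widetilde{N}| \geq 2$, so the recursion depth is bounded by $\log_2 |G/A| \leq \log_2 n! = O(n\log n)$, which is polynomial in $n$. Each level performs only $n^{O(1)}$ work (computing a minimal normal subgroup of the current quotient and invoking the abelian case of Lemma \ref{Gen-Set-Quotient}), so the total running time is polynomial in $n$.

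The only delicate point I anticipate is a bookkeeping one: we must confirm that, throughout the recursion, each quotient $G/H$ encountered can be handled efficiently, in particular that membership in the growing normal subgroup $H$ (needed to verify the candidate generating sets produced in Lemma \ref{Gen-Set-Quotient}) can be tested in $\mathrm{poly}(n)$ time. Since each $H$ is always available as a subgroup of $G \leq S_n$ via an explicit generating set, this reduces to standard permutation-group routines such as Schreier--Sims membership testing, so it poses no real obstacle.
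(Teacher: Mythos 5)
Your proposal is correct and follows essentially the same route as the paper's proof: recurse on a minimal normal subgroup $\widetilde{N}=H/A$ of $G/A$, note that solvability forces $\widetilde{N}$ to be abelian so the lifting step of Lemma~\ref{Gen-Set-Quotient} runs in polynomial time, and bound the recursion depth by $O(\log|G|)=O(n\log n)$. The extra remark on membership testing is a reasonable detail handled implicitly in the paper via standard permutation-group routines.
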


\begin{proof}
If $G/A=\{e \}$ then the problem is trivial. We first compute a minimal normal subgroup $\widetilde{N}$ of $G/A$. This could be done in polynomial time \cite{ronyai-minimal-normal-algo}. The algorithm for finding a minimal normal subgroup works for quotient groups too (see e.g., \cite{Ako-Seress,kantor-computing-quotient}). Note that, $\widetilde{N}$ will be of the form $H/A$, where $H =\langle h_1, \ldots, h_l, A \rangle$ is a normal subgroup of $G$. Next we recursively find a minimum generating set of $G/H$. Since $G$ is solvable $\widetilde{N}$ will be an abelian subgroup of $G$. This follows from the fact that minimal normal subgroups are product of isomorphic simple groups (see e.g., \cite{rotman}). Using Lemma \ref{Gen-Set-Quotient}, a minimum generating set of $G/A$ could be found in polynomial time.

Since in each subsequent recursive calls the size of $G/A$ reduces by at least half, the number of iteration is $O(\log n!)=O(n \log n)$. Therefore, the runtime of the algorithm is polynomial in $n$. 
\end{proof}

If we take $A=\{e\}$ in Lemma \ref{Solvable-PG-Lemma}, we obtain the following result. 
\begin{theorem}(\cite{gaschutz1959mingensolvable,lucchini-minimal-solvable})\label{Solvable-PG-Theorem}
Let $G \leq S_{n}$ be a solvable permutation group given by its generating set. Then a minimum generating set of $G$ can be computed in polynomial time.
\end{theorem}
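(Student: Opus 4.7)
The plan is to obtain the theorem as the immediate specialization $A = \{e\}$ of Lemma \ref{Solvable-PG-Lemma}, so the real content is to sketch a polynomial-time recursion that, given a solvable $G \leq S_n$ and a normal subgroup $A \trianglelefteq G$ (both by generating sets), outputs a minimum generating set of $G/A$. Setting $A = \{e\}$ at the end yields a minimum generating set of $G$ itself.

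The recursion I would describe proceeds on $|G/A|$. In the base case $G/A = \{e\}$, return the empty set. Otherwise, use the known polynomial-time procedures for finding a minimal normal subgroup in (quotients of) permutation groups \cite{ronyai-minimal-normal-algo,Ako-Seress,kantor-computing-quotient} to compute a minimal normal subgroup $\widetilde{N}$ of $G/A$. Write $\widetilde{N} = H/A$, where $H = \langle h_1,\ldots,h_l,A \rangle \trianglelefteq G$ is the preimage of $\widetilde{N}$ under the natural projection. Recursively compute a minimum generating set of $G/H$, and then invoke Lemma \ref{Gen-Set-Quotient} to lift it to a minimum generating set of $G/A$.

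The structural fact that makes the lift cheap is that $\widetilde{N}$ is necessarily abelian: any minimal normal subgroup of a finite group is a direct product of isomorphic simple groups, and solvability of $G/A$ (inherited from $G$) forces those simple factors to be cyclic of prime order. Consequently the ``abelian'' branch of Lemma \ref{Gen-Set-Quotient} applies, so the lifting step runs in $n^{O(1)}$ time, in particular without any exponential search over $\widetilde{N}$.

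Finally, the recursion depth is shallow: each recursive call replaces the quotient $G/A$ by $G/H$, whose order is smaller by a factor of $|\widetilde{N}| \geq 2$, so there are at most $O(\log |G|) = O(n \log n)$ calls, each doing polynomial work. I do not foresee a real obstacle here; the non-trivial mathematics — the bound $d(G) \leq d(G/N)+1$ and the Gasch\"utz–Lucchini–Menegazzo lifting in the abelian case — has already been packaged into Theorem \ref{Abelian-Theorem} and Lemma \ref{Gen-Set-Quotient}, and everything else is standard polynomial-time permutation group machinery.
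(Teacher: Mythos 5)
Your proposal is correct and follows essentially the same route as the paper: the paper also proves the quotient version (its Lemma \ref{Solvable-PG-Lemma}) by recursively peeling off a minimal normal subgroup $\widetilde{N}=H/A$ of $G/A$, using solvability to ensure $\widetilde{N}$ is abelian so that the polynomial-time branch of Lemma \ref{Gen-Set-Quotient} applies, and then obtains the theorem by setting $A=\{e\}$. Your accounting of the recursion depth ($O(\log|G|)=O(n\log n)$ calls, polynomial work per call) matches the paper's as well.
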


Now we discuss a problem similar to the one discussed at the end of Section~\ref{MIN-GEN-Product-Simple-Groups}. Namely, the problem of computing a minimum generating set of a permutation group that is a direct product of two groups. As discussed before, even if we know a minimum generating set of groups $M$ and $K$, it is not clear how to compute a minimum generating set of $M\times K$ or $d(M \times K)$. There are two challenges in applying the approach discussed at the end of Section \ref{MIN-GEN-Product-Simple-Groups}. Firstly, we do not know how to solve the MIN-GEN problem for permutation groups which are products of simple groups. The other issue is working with quotient groups\footnote{The Cayley table of a quotient group, say $G/J$, can be computed in polynomial time if the Cayley tables of $G$ and $J$ are known. For permutation groups, working with a quotient group needs extra effort as the degree of a quotient group could be very large. While we believe that for the current problem, the issue of a quotient group can be taken care of, we are not completely certain.}.

The situation, however, is different when at least one of the product groups is solvable. Formally, let $G=M \times K$ where $M$ is a solvable permutation group and $K$ is a permutation group such that a minimum generating set $\{ k_1,\ldots ,k_r \}$ of $K$ is already known. In this case, we can design a recursive algorithm to find a minimum generating set of $G$.

\begin{lemma}\label{Product-PG-Lemma}
Let $G=M \times K \leq S_{n}$ where  $M$ is a solvable permutation group and $K$ is any permutation group such that a minimum generating set  $\{ k_1,\ldots ,k_r \}$ of $K$ is given. Let $A \times \{e\}$ be a normal subgroup of $G$. Then a minimum generating set of $G/A \times  \{e\}$ could be found in polynomial time.
\end{lemma}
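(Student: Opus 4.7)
The plan is to mimic the recursive strategy of Lemma~\ref{Solvable-PG-Lemma}, but peel off minimal normal subgroups only from the solvable factor so that the subgroups encountered are always abelian and hence Lemma~\ref{Gen-Set-Quotient} applies in polynomial time. Write $\widetilde{G} = G/(A \times \{e\}) \cong (M/A) \times K$. The base case is $M/A = \{e\}$: then $\widetilde{G} \cong K$ and $\{(e,k_1),\ldots,(e,k_r)\}$ is a minimum generating set, which we output.

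For the recursive step, assume $M/A \neq \{e\}$. Since $M$ is solvable, so is $M/A$, and we can compute in polynomial time (using the algorithms of \cite{ronyai-minimal-normal-algo,Ako-Seress,kantor-computing-quotient} applied to the quotient) a minimal normal subgroup $\widetilde{N}$ of $M/A$. Because $M/A$ is solvable, $\widetilde{N}$ is elementary abelian. Lift $\widetilde{N}$ to $H \trianglelefteq M$ with $\widetilde{N} = H/A$, so $H \times \{e\}$ is normal in $G$ and contains $A \times \{e\}$. I will verify that $\widetilde{N} \times \{e\}$ is in fact a minimal normal subgroup of $\widetilde{G}$: normality is clear, and any normal subgroup of $(M/A) \times K$ contained in $\widetilde{N} \times \{e\}$ has the form $N' \times \{e\}$ with $N' \trianglelefteq M/A$ (since the $K$-coordinate acts trivially on $\widetilde{N} \times \{e\}$), so minimality transfers from $M/A$ to $\widetilde{G}$.

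Now $G/(H \times \{e\}) \cong (M/H) \times K$, and $M/H$ is again a solvable permutation group in which $K$ still comes with its given minimum generating set $\{k_1,\ldots,k_r\}$. I recurse on this smaller instance (the normal subgroup of the new instance being the trivial one, or equivalently we treat $M/H$ in place of $M/A$ directly), obtaining in polynomial time a minimum generating set of $G/(H \times \{e\})$. Applying Lemma~\ref{Gen-Set-Quotient} to $\widetilde{G}$ with the abelian minimal normal subgroup $\widetilde{N} \times \{e\}$ then lifts this in polynomial time to a minimum generating set of $\widetilde{G}$.

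Each recursive call strictly shrinks $|M|/|A|$ by a factor of at least $|\widetilde{N}| \geq 2$, so the recursion depth is $O(\log |M|) = O(n \log n)$, and each level does only polynomially many operations; the total runtime is polynomial in $n$. The main subtlety I expect is the verification that a minimal normal subgroup of $M/A$ remains minimal normal in the product $(M/A) \times K$ (so that Lemma~\ref{Gen-Set-Quotient} is genuinely applicable), together with the mild bookkeeping needed to handle quotients of permutation groups effectively; the solvability of $M$ is used precisely to guarantee that every such $\widetilde{N}$ is abelian, sidestepping the non-abelian branch of Lemma~\ref{Gen-Set-Quotient} whose cost would not be polynomial in general.
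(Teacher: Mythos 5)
Your proposal is correct and follows essentially the same route as the paper's own proof: recursively peel off a minimal normal subgroup of $M/A$ (necessarily abelian by solvability of $M$), observe that $\widetilde{N}\times\{e\}$ is minimal normal in $(M/A)\times K$, recurse on $G/(H\times\{e\})$, and lift via the abelian (polynomial-time) case of Lemma~\ref{Gen-Set-Quotient}. Your explicit verification of minimality in the product and the base case with $\{(e,k_1),\ldots,(e,k_r)\}$ just fill in details the paper leaves as ``easy to check.''
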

\begin{proof}
The proof is similar as the proof of Lemma \ref{Solvable-PG-Lemma}. The detailed proof is given in the Appendix (Lemma \ref{Product-PG-Lemma-Appendix}).
\end{proof}

\begin{theorem} 
Let $G=M \times K \leq S_{n}$ where $M$ is a solvable permutation group and $K$ be a permutation group such that a minimum generating set $\{ k_1,\ldots ,k_r \}$ of $K$ is given. Then a minimum generating set of $G$ can be computed in polynomial time. 
\end{theorem}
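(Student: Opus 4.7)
The plan is to obtain this theorem as an immediate corollary of Lemma \ref{Product-PG-Lemma}. I would instantiate that lemma with $A = \{e\}$, the trivial subgroup of $M$ (specified by the empty generating set). Then $A \times \{e\}$ is the identity subgroup of $G = M \times K$, which is trivially normal, and the quotient $G/(A \times \{e\})$ is canonically isomorphic to $G$ itself. Applying Lemma \ref{Product-PG-Lemma} with this choice of $A$ produces, in polynomial time, a minimum generating set of $G/(A \times \{e\}) \cong G$, which is exactly what the theorem asserts. All hypotheses of the lemma are met: the solvable group $M$ is available as a factor of the given $G$, the minimum generating set $\{k_1, \ldots, k_r\}$ of $K$ is supplied by assumption, and $A \times \{e\}$ is normal in $G$.

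To make the appeal self-contained, I would recall why the underlying lemma works, by analogy with the solvable-group algorithm of Lemma \ref{Solvable-PG-Lemma}. The recursion is on the order of $M/A$: at each call one computes a minimal normal subgroup $\widetilde{N}$ of $G/(A \times \{e\})$ that lies inside the image of $M \times \{e\}$, writes $\widetilde{N} = H/(A \times \{e\})$ for some normal subgroup $H = H_M \times \{e\}$ of $G$, and recursively computes a minimum generating set of $G/(H_M \times \{e\})$. Because $M$ is solvable, $\widetilde{N}$ is abelian, so the polynomial-time branch of Lemma \ref{Gen-Set-Quotient} lifts the recursive answer to a minimum generating set of $G/(A \times \{e\})$. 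The base case is $A = M$, where $G/(M \times \{e\}) \cong K$ and the hypothesized set $\{k_1, \ldots, k_r\}$ is plugged in. Each recursive step at least halves $|M/A|$, so the recursion depth is $O(n \log n)$ and each step is polynomial-time, giving the overall polynomial bound.

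The main subtlety I anticipate is ensuring that minimal normals are always selected from the $M$-component so that solvability forces them to be abelian and the polynomial-time case of Lemma \ref{Gen-Set-Quotient} applies. This is precisely the point addressed in the appendix proof of Lemma \ref{Product-PG-Lemma}, and it is exactly what allows the algorithm to sidestep the more expensive non-abelian branch; the $K$-component is never peeled off piece by piece but rather handled in one shot at the base case via the given generating set of $K$. With this lemma in hand, the theorem itself requires no further work beyond the single substitution $A = \{e\}$.
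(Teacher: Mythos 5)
Your proposal is correct and follows exactly the paper's argument: the theorem is obtained by taking $A=\{e\}$ in Lemma~\ref{Product-PG-Lemma}, and your recap of why that lemma works mirrors the appendix proof (Lemma~\ref{Product-PG-Lemma-Appendix}). No gaps to report.
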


\begin{proof}
In Lemma \ref{Product-PG-Lemma}, take $A=\{e\}$.
\end{proof}

\section{The MIN-GEN Problem for Primitive Permutation Groups - A Quasi-Polynomial Time Algorithm}\label{MIN-GEN-Primitive-PG}
In this section, we describe an algorithm to solve the MIN-GEN problem for primitive permutation groups that runs in quasi-polynomial time in the degree of the input group.

We need the following three theorems to prove our main result of this section. 
\begin{theorem}\cite{lucchini-bounds-primitived(G)}\label{bounds on d(G) for primitive PG}
There exists a constant $c$ such that if $G$ is a primitive permutation group of degree $n \geq 3$ then $$d(G) \leq \frac{c \log n}{\sqrt{\log \log n}}.$$
\end{theorem}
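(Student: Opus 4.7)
The plan is to proceed by a case analysis based on the O'Nan--Scott classification of primitive permutation groups. Recall that for a primitive $G \leq S_n$ with socle $N = \mathrm{soc}(G)$, either $N$ is elementary abelian with $|N| = n$ (affine type), or $N \cong T^k$ for some nonabelian finite simple group $T$ and some $k \geq 1$ (giving the almost simple, simple diagonal, product action, and twisted wreath types).

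I would first dispose of the affine case. Here $G = N \rtimes H$ with $N \cong \mathbb{F}_p^d$, $n = p^d$, and $H \leq \mathrm{GL}_d(\mathbb{F}_p)$ acting irreducibly; Theorem \ref{bound-on-d(G)} gives $d(G) \leq d(H) + 1$, and standard bounds on the number of generators of an irreducible linear group of degree $d$ yield $d(H) = O(\log d) = O(\log \log n)$, which is comfortably inside the target bound. For the non-affine cases, the central ingredient is a bound on $d(T^k)$ for nonabelian simple $T$, going back to Philip Hall: if $\phi_t(T)$ counts the generating $t$-tuples of $T$, then $d(T^k) \leq t$ whenever $k$ is at most the number of $\mathrm{Aut}(T)$-orbits on such tuples, and Dixon--Liebeck--Shalev type estimates give $\phi_t(T) \geq \alpha\,|T|^t$ for $t \geq 2$ with $\alpha > 0$ bounded below on nonabelian simple $T$. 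Inverting yields $d(T^k) \leq \log k / \log |T| + O(1)$. Iterating Theorem \ref{bound-on-d(G)} up a chief series of $G$ over $N$, and using that $G/N$ embeds in $\mathrm{Out}(T) \wr S_k$ which is itself generated by $O(\log k)$ elements, gives $d(G) \leq d(N) + O(\log k)$.

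It remains to relate $k$ to $n$: in product action $n = m(T)^k$ with $m(T) \geq 5$ the minimal faithful permutation degree of $T$; in simple diagonal $n = |T|^{k-1}$; and in the twisted wreath case a comparable inequality $n \geq |T|^{k-1}$ holds. In every sub-case $k \leq \log n / \log 5$ and $k \log|T| \leq \log n!$. The main obstacle, and the source of the $\sqrt{\log \log n}$ in the denominator, is the uniform optimization of $\log k / \log |T| + O(\log k)$ over all admissible pairs $(T,k)$ compatible with a primitive action of degree $n$. Writing $s = \log|T|$ and using $k \leq (\log n)/\log m(T)$, one shows the dominant contribution behaves like $\max\{s^{-1}\log k,\ \log k\}$ where $\log k$ and $s$ are coupled through $n$; balancing by choosing the regime where $s \approx \sqrt{\log\log n}$ produces the stated bound $c \log n / \sqrt{\log \log n}$. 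I expect the genuinely hard part to be carrying out this optimization tightly in the simple diagonal and twisted wreath cases, where the socle is not quite a \emph{direct} power of $T$ in its natural action and the contribution of the top group needs delicate accounting.
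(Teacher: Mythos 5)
First, note that the paper does not prove this statement at all: Theorem~\ref{bounds on d(G) for primitive PG} is imported as a black box from the cited reference (Lucchini et al.), so there is no in-paper proof to compare against; your proposal has to stand on its own, and as it stands it has two genuine gaps. The most serious one is the affine case, which you dismiss in one line: the claim that an irreducible subgroup $H \leq \mathrm{GL}_d(\mathbb{F}_p)$ satisfies $d(H) = O(\log d)$ is false. Irreducible linear groups of degree $d$ can require on the order of $d/\sqrt{\log d}$ generators, and since $n = p^d$ this translates precisely into the quantity $\log n/\sqrt{\log\log n}$; the affine case is not "comfortably inside the target bound" but is in fact where the extremal examples live, and bounding $d(H)$ for irreducible linear groups is an equally deep companion theorem proved by the same authors, not a "standard bound". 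Any correct proof must confront this case head-on (the actual argument goes through the theory of crowns/complemented chief factors, counting the multiplicity of a given irreducible $G$-module in a chief series against its dimension, which is where the $\sqrt{\log\log n}$ genuinely comes from), rather than reduce to it being negligible.

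The second gap is in the non-affine cases: from the fact that $G/N$ embeds in $\mathrm{Out}(T) \wr S_k$ you conclude it is generated by $O(\log k)$ elements, but $d(\cdot)$ is not monotone under passage to subgroups -- a subgroup of a $2$-generated group can need arbitrarily many generators. What you actually know is that the top group induces a transitive subgroup of $S_k$, and transitive groups of degree $k$ may need on the order of $k/\sqrt{\log k}$ generators (this is the bound the paper cites separately in Section~\ref{Min-Gen-Bounded-non-abelain-chief-factor}); with $k$ up to about $\log n$ this gives only $O(\log n)$ unless one invokes that sharper transitive-group theorem, which is itself part of the same circle of hard results. By contrast, your use of Hall/Dixon-type counting for $d(T^k)$ and of Theorem~\ref{bound-on-d(G)} for a unique minimal normal socle is sound; but the final "balancing $s \approx \sqrt{\log\log n}$" step is not an argument, and the decisive cases (affine, and the top group in product/diagonal actions) are exactly the ones your sketch waves through.
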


\begin{theorem}\cite{lucchini-uniuqe-minimal}\label{unique-minimal-thm}
If $G$ is a non-abelian finite group with a unique minimal normal subgroup $N$ then $d(G)= \max(2,d(G/N))$.
\end{theorem}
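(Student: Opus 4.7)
The plan is to prove the two inequalities $d(G) \geq \max(2,d(G/N))$ and $d(G) \leq \max(2,d(G/N))$ separately. The lower bound is immediate: any generating set of $G$ projects modulo $N$ to a generating set of $G/N$, hence $d(G) \geq d(G/N)$; and since $G$ is non-abelian it is in particular non-cyclic, so $d(G) \geq 2$. This half requires no hypothesis on $N$ beyond normality.

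For the upper bound, set $t = \max(2, d(G/N))$ and dispose of the easy cases first. If $G = N$, then $N$ is a minimal normal subgroup of itself and $G$ is non-abelian, which forces $G$ to be a non-abelian simple group; by the theorem of Guralnick--Kantor (cited earlier when handling simple groups in Lemma \ref{Gen-Set-Quotient}), $d(G) = 2 = t$. If $G \neq N$ but $d(G/N) \leq 1$, then Theorem \ref{bound-on-d(G)} already gives $d(G) \leq \max(2, d(G/N)+1) = 2 = t$. The only nontrivial case is $d(G/N) = t \geq 2$: here Theorems \ref{Abelian-Theorem} and \ref{Non-Abelian-Theorem} pin $d(G)$ to either $t$ or $t+1$, so the entire task reduces to excluding $d(G) = t+1$ using that $N$ is the \emph{unique} minimal normal subgroup of $G$.

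For the ruling-out step, I would split on whether $N$ is abelian. If $N$ is abelian, I would compare $N$ with the Frattini subgroup $\Phi(G)$. When $N \subseteq \Phi(G)$, the non-generator property of $\Phi(G)$ shows that \emph{any} lift of a generating $t$-tuple of $G/N$ already generates $G$, hence $d(G) \leq t$. When $N \not\subseteq \Phi(G)$, some maximal subgroup $M$ does not contain $N$, so $G = MN$, and minimality of $N$ (together with $N$ abelian, which makes $M \cap N$ normal in $G$) forces $M \cap N = 1$; thus $N$ has a complement $M \cong G/N$. Uniqueness of $N$ then forces this complement to act faithfully on $N$ (any kernel of the action would be a normal subgroup of $G$ intersecting $N$ trivially, hence trivial by uniqueness of $N$), and Gasch\"utz's theorem (Theorem \ref{Gaschütz-thorem}), refined as in Theorem \ref{Generalization-of-lucchini-lemma}, lets one modify $g_i \mapsto g_i x_i$ for suitable $x_i \in N$ to produce a generating $t$-tuple of $G$. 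If instead $N$ is non-abelian, then $N$ is a direct power $S^k$ of a non-abelian simple group; since $Z(N) = 1$ and any nontrivial normal subgroup of $G$ must contain $N$ by uniqueness, we deduce $C_G(N) = 1$ (because $C_G(N) \cap N = Z(N) = 1$ would otherwise give a nontrivial normal subgroup of $G$ avoiding $N$).

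The hard part of the plan is this last non-abelian sub-case: one must rule out option (ii) of Theorem \ref{Non-Abelian-Theorem} using only $C_G(N) = 1$ and the $S^k$-structure of $N$. The argument (following Lucchini \cite{lucchini-uniuqe-minimal}) exploits the faithful conjugation action of $G/N$ on the set of $k$ simple direct factors of $N$ to show that, among the $|N|^{t}$ candidate modifications $(g_1 x_1, \ldots, g_t x_t)$, one can always be chosen to already generate $G$, so the spare generator $x_{t+1}$ is never needed. The abelian side, controlled by the clean Frattini/complement dichotomy together with Gasch\"utz, is comparatively routine; it is the structural bookkeeping on the simple-power side that is the main obstacle, and where I would expect to spend most of the technical effort.
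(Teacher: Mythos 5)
This statement is imported by the paper from Lucchini \cite{lucchini-uniuqe-minimal} without proof, so your proposal has to stand on its own. Its skeleton is fine: the lower bound, the cases $G=N$ and $d(G/N)\le 1$ (via Theorem~\ref{bound-on-d(G)}), the Frattini subcase $N\le\Phi(G)$, the construction of a complement $M$ with $M\cap N=1$ when $N$ is abelian and $N\not\le\Phi(G)$, the faithfulness of $M$ on $N$, and $C_G(N)=1$ when $N$ is non-abelian are all correct and standard. But in the only case where the theorem has content --- $d(G/N)=t\ge 2$, where one must exclude $d(G)=t+1$ --- the plan contains no argument. In the abelian complemented subcase you invoke Gasch\"utz's theorem and Theorem~\ref{Generalization-of-lucchini-lemma}, but both have as a \emph{hypothesis} that $G$ can be generated by $t$ elements, which is exactly what is to be proved; as written this step is circular. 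A genuine input is needed there, e.g.\ the cocycle-counting argument for $G=V\rtimes M$ with $V$ a nontrivial faithful irreducible $M$-module: any non-generating lift of a generating $t$-tuple of $M$ lies in a complement of $V$, there are $|Z^1(M,V)|=|H^1(M,V)|\cdot|V|$ complements (since $C_V(M)=0$), and one needs the Aschbacher--Guralnick bound $|H^1(M,V)|<|V|$ to conclude that some lift generates once $t\ge 2$. Nothing playing this role appears in your proposal.

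In the non-abelian subcase you explicitly defer the core argument to Lucchini's paper (``one can always be chosen to already generate $G$'' is precisely the assertion to be proved, not a proof), so the heart of the theorem is missing there as well; moreover the specific mechanism you name is off: what is faithful is the conjugation action of $G$ on $N$ itself (from $C_G(N)=1$, giving $G\hookrightarrow \mathrm{Aut}(N)$), not the induced action of $G/N$ on the set of $k$ simple direct factors of $N$ --- for $k=1$ that action is trivial. In short: correct and careful reductions, but the two subcases that carry the actual content of $d(G)\le\max(2,d(G/N))$ are respectively circular and absent, so this is a proof outline rather than a proof.
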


\begin{theorem}(see e.g., \cite{dixon-permutation,liebeck1984})\label{Base-order-primitive-permutation}
Let $G$ be a primitive permutation group of degree $n$. Then there is a constant $b > 0$ such that at least one of the following holds:
\begin{itemize}
    \item[(i)] there are positive integers $l$, $k$, and $m$ such that $G$ has a socle which is permutation isomorphic to $A_m^l$ where the action of $A_m$ is equivalent to its action on $k$-element subsets of $\{1,\ldots,m \}$ and $n=\genfrac(){0pt}{2}{m}{k}^l$; or
\item[(ii)] $G$ has order less than $exp(b(\log n)^2)$.
\end{itemize}
\end{theorem}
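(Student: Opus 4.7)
The plan is to apply the O'Nan--Scott theorem, which partitions primitive permutation groups according to the structure of their socle $N = \mathrm{soc}(G)$ into five types: affine (with $N$ elementary abelian), almost simple (with $N$ nonabelian simple), simple diagonal, product action, and twisted wreath. For each type I will either establish an order bound $|G| < \exp(b (\log n)^2)$ for a suitable absolute constant $b$, or locate $G$ inside the exceptional family described in clause~(i).

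First I would dispatch the affine case: here $G \leq \mathrm{AGL}_d(p)$ with $n = p^d$, so $|G| \leq n \cdot |\mathrm{GL}_d(p)| \leq n^{d+1}$, which is $\exp(O((\log n)^2))$. The simple diagonal and twisted wreath types admit analogous direct bounds polynomial in $|T|$, where $T$ is the underlying nonabelian simple group, and since $n$ dominates $|T|$ in those constructions the required estimate follows.

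The substance of the theorem lies in the almost simple case. Here $N = T$ is a nonabelian simple group and $T \trianglelefteq G \leq \mathrm{Aut}(T)$ acting faithfully and primitively of degree $n$. Invoking the classification of finite simple groups through Liebeck's 1984 theorem on the orders of primitive groups, one obtains the dichotomy: either $|G| \leq n^{c \log n}$ for a fixed constant $c$ (which is absorbed into $\exp(b (\log n)^2)$ after adjusting $b$), or $T = A_m$ with the action of $T$ equivalent to its action on $k$-element subsets of $\{1,\ldots,m\}$, giving $n = \binom{m}{k}$. This realises clause~(i) with $l = 1$.

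Finally I would treat the product action type, where $N = T^l$ for some $l \geq 2$, the simple group $T$ acts primitively on a set $\Delta$ with $|\Delta| = m_0$, and $G$ embeds into $\mathrm{Aut}(T) \wr S_l$ acting on $\Delta^l$, so $n = m_0^l$. Applying the almost simple dichotomy to the $T$-action on $\Delta$: if $|T| \leq \exp(b' (\log m_0)^2)$, then $|G| \leq |T|^l \cdot |\mathrm{Out}(T)|^l \cdot l!$ and a routine estimation using $\log n = l \log m_0$ yields $|G| \leq \exp(b (\log n)^2)$; otherwise $T = A_m$ acting on $k$-subsets, and $G$ lands in clause~(i) with the given $l$ and socle $A_m^l$ acting on $l$-tuples of $k$-subsets. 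The main obstacle is the almost simple step, which is precisely the content of Liebeck's theorem and depends on CFSG together with case-by-case work across the families of finite simple groups to isolate alternating groups on subsets as the unique family where the order can be large relative to the permutation degree; I would cite it rather than reprove it.
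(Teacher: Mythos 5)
The paper does not prove this statement at all: it is imported verbatim from the literature (Dixon--Mortimer and Liebeck's 1984 theorem on orders of primitive groups), so there is no internal proof to match yours against. Your outline is essentially the standard derivation of that cited result and is sound: the affine, simple diagonal and twisted wreath types give order bounds that are polynomial in $n$ up to an $l!$ factor with $l = O(\log n)$, hence well within $\exp(b(\log n)^2)$; the almost simple case is exactly the CFSG-dependent content of Liebeck's theorem, which you (like the paper) cite rather than reprove; and your bootstrap of the product-action case from the component dichotomy, using $\log n = l\log m_0$ so that $l(\log m_0)^2 \leq (\log n)^2$ and $l!\leq \exp(O(\log n\log\log n))$, correctly yields either the bound in (ii) or the socle $A_m^l$ on $l$-tuples of $k$-subsets with $n=\binom{m}{k}^l$ as in (i). Two small points to tidy: in some formulations of O'Nan--Scott the component of a product-action group may itself be of diagonal type (not only almost simple), but that branch is covered by the same direct order bound, so nothing breaks; and when you ``apply the almost simple dichotomy to the $T$-action on $\Delta$'' you should apply it to the primitive almost simple group $H\leq\mathrm{Sym}(\Delta)$ induced by a point stabilizer's component, with socle $T$, rather than to $T$ alone. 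With those clarifications your sketch is a correct reconstruction; the essential depth sits in the cited Liebeck theorem in both your treatment and the paper's.
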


We now state and prove our result on the minimum generating set problem for primitive permutation groups. 

\begin{theorem}\label{MinGen for Primitive-PG}
Let $G\leq S_n$ be a primitive permutation group. Then  $d(G)$ can be computed in time $exp(\log^{O(1)}(n))$.
\end{theorem}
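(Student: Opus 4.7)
The plan is to split on the structural dichotomy provided by Theorem~\ref{Base-order-primitive-permutation}: either (i) $G$ has socle permutation-isomorphic to $A_m^l$ with $n=\binom{m}{k}^l$, or (ii) $|G|\le\exp(b(\log n)^2)$. The two cases require qualitatively different arguments, both resulting in quasi-polynomial time.

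Case (ii) is handled by brute force. By Theorem~\ref{bounds on d(G) for primitive PG} we have $d(G)\le c\log n/\sqrt{\log\log n}$, so it suffices to enumerate all subsets of $G$ of size at most this bound and check (using Schreier–Sims membership testing) whether each generates $G$. The number of subsets is at most
\[
|G|^{c\log n/\sqrt{\log\log n}} \;\le\; \exp\!\Bigl(\tfrac{bc(\log n)^3}{\sqrt{\log\log n}}\Bigr),
\]
which is $\exp(\log^{O(1)}n)$, and each membership check takes $n^{O(1)}$ time.

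Case (i) is handled by reducing to the Cayley-table algorithm of Section~\ref{MIN-GEN-General-Groups} applied to the small quotient $G/\mathrm{soc}(G)$. First, compute $N=\mathrm{soc}(G)$ in polynomial time via standard permutation-group routines \cite{Ako-Seress}. By the O'Nan–Scott classification, a primitive group with socle $A_m^l$ in the product action has $N$ as its \emph{unique} minimal normal subgroup, so Theorem~\ref{unique-minimal-thm} applies and gives $d(G)=\max(2,d(G/N))$. Because $G$ normalises the $l$ simple factors of $N$, we have an embedding $G/N\hookrightarrow \mathrm{Out}(A_m)\wr S_l$, so
\[
|G/N|\;\le\;|\mathrm{Out}(A_m)|^{l}\cdot l!\;\le\;4^{l}\cdot l!.
\]
Since $n=\binom{m}{k}^l\ge 2^l$ we have $l\le\log_2 n$, hence $|G/N|\le\exp(O(\log n\log\log n))$. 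Next, compute a transversal of $N$ in $G$ and build the Cayley table of $G/N$ by polynomially many multiplications and membership tests in $G$; this costs $|G/N|^2\cdot n^{O(1)}=\exp(\log^{O(1)}n)$ time. Finally, run the Cayley-table algorithm of Theorem~\ref{thm-quotient-general-group} on $G/N$; its running time is $|G/N|^{(1/4)\log_p |G/N|+O(1)}=\exp(\log^{O(1)}n)$. Output $\max(2,d(G/N))$.

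The main obstacle is the structural bookkeeping in case (i): verifying that $\mathrm{soc}(G)$ is the unique minimal normal subgroup (so that Theorem~\ref{unique-minimal-thm} is applicable) and controlling $|G/N|$ by identifying the embedding into the wreath product. Both follow from the product-action description in Theorem~\ref{Base-order-primitive-permutation}(i); distinguishing case (i) from case (ii) algorithmically can be done by first computing $\mathrm{soc}(G)$ and checking its isomorphism type and action, falling back to the brute-force routine of case (ii) if the socle is not of the stated form or if $|G|$ is already small. The remainder is routine bookkeeping using standard permutation group algorithms, and the two quasi-polynomial bounds combine to give the claimed runtime of $\exp(\log^{O(1)}n)$.
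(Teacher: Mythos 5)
Your proposal is correct and follows essentially the same route as the paper: the dichotomy of Theorem~\ref{Base-order-primitive-permutation}, brute force via the bound of Theorem~\ref{bounds on d(G) for primitive PG} in the small-order case, and in the product-action case reducing to $d(G/\mathrm{soc}(G))$ via Theorem~\ref{unique-minimal-thm} with the bound $|G/\mathrm{soc}(G)|\le|\mathrm{Out}(A_m)\wr S_l|\le 4^l\,l!$, which is quasi-polynomial in $n$. One small imprecision: the embedding $G/N\hookrightarrow\mathrm{Out}(A_m)\wr S_l$ does not come from $G$ normalising each simple factor (it permutes them); it comes from $C_G(N)=1$ (since the socle is non-regular), which gives $G\hookrightarrow\mathrm{Aut}(N)$ and hence $G/N\hookrightarrow\mathrm{Out}(N)\cong\mathrm{Out}(A_m)\wr S_l$, exactly as in the paper.
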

\begin{proof}

Given a permutation group $G$, if $|G| \leq exp(b(\log n)^2)$ then Theorem \ref{bounds on d(G) for primitive PG} implies that a minimum generating set of $G$ can be found in quasi-polynomial time.

Thus it is enough to consider the case when $G$ is a primitive group and there are positive integers $l$, $k$, and $m$ such that $G$ has a socle which is permutation isomorphic to $A_m^l$ where the action of $A_m$ is equivalent to its action on $k$-element subsets of $\{1,\ldots,m \}$ and $n=\genfrac(){0pt}{2}{m}{k}^l$. Let $H:=  \mathrm{soc}(G)$.

We note that $H \cong A_m^l$ and it is not regular (see  page no. 137, \cite{dixon-permutation}). Since $ n=\genfrac(){0pt}{2}{m}{k}^l$, $G$ is of product type (see Section 4.8 of \cite{dixon-permutation}) and $H$ is the unique minimal normal subgroup of $G$ (see Theorem 4.3B, \cite{dixon-permutation}). Note that a minimal normal subgroup of $G$ (or here the socle of $G$) can be found in polynomial time (e.g., see page no 49, \cite{Ako-Seress}). Thus, we have $d(G)=\max(2,d(G/H))$. 

Now we prove that $|G/H| \leq n^2 (\log n)^{\log n}$.

Since $H$ is not regular, the centralizer $C_{G}(H)=1$ (see e.g., Theorem 4.3B, \cite{dixon-permutation}). In this case it is easy to check that the conjugation action of $G$ on $H$ gives an embedding of $G$ into $\mathrm{Aut}(H)$. 
Let $\Gamma=\{1,2,\ldots,l\}$. 
\begin{align*}
    |G/H| &\leq |\mathrm{Aut}(H)|/|H|\\
        &= |\mathrm{Aut}(H)|/ |\mathrm{Inn}(H)| \hspace{34pt} (\text{as}\,\, \mathrm{Inn}(H) \cong H/Z(H)\, \text{and} \, Z(H)=1)\\
        &=|\mathrm{Out(H)}|= |\mathrm{Out(A_m^{l})}| \\
        &= |\mathrm{Out}(A_m) \wr_{\Gamma} S_l| \hspace{41pt} (\text{see page no. 131 \cite{dixon-permutation}})\\
        &\leq4^{l} l! \hspace{71pt}(\text{as} \,\, |\mathrm{Out}(A_m)|=2\,\, \text{when}\, m \neq 6 \,\text{and} \,|\mathrm{Out}(A_6)|=4)
\end{align*}
Notice that, $n\geq m^l$ which implies that $l \leq \log n$ (since $m > 2$). Thus, $4^l l!\leq n^2 (\log n)^{\log n}$.

Therefore, we can find a minimum generating set of $G/H$ and find $d(G)$ using Theorem \ref{unique-minimal-thm} in quasi-polynomial time. Note that the final output is the size of the minimum generating set and not a generating set. 
\end{proof}
The following corollary is a direct application of Theorem \ref{MinGen for Primitive-PG}. We consider the wreath product $H \wr G$ of a solvable group $H$ and a primitive permutation group $G$ such that  $\mathrm{gcd}(|H/H'|,|G|)=1$ and show that $d(H \wr G)$ can be found in quasi-polynomial time.

\begin{corollary}
Let $H$ and $G$ be two subgroups of $S_n$. Suppose, $H$ is a solvable permutation group and  $G$ is a primitive permutation group  such that $\mathrm{gcd}(|H/H'|,|G|)=1$. Then $d(H \wr G)$ can be found in quasi-polynomial time, where $H'$ is the commutator subgroup of $H$.
\end{corollary}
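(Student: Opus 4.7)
The plan is to compute $d(W) := d(H \wr G)$ by descending a chief series of $W$, using Theorem~\ref{MinGen for Primitive-PG} as a black box for the top quotient $W/B \cong G$ and the abelian-chief-factor machinery of Section~\ref{Meta-Theorem} for the solvable base $B := H^n$.

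Realize $W$ as a permutation group of degree $n^2$ via its natural imprimitive action, so $W \leq S_{n^2}$ is given by a polynomial-size generating set; then $B \trianglelefteq W$, $W/B \cong G$, and $B$ is solvable (since $H$ is). Compute $d(G)$ in quasi-polynomial time using Theorem~\ref{MinGen for Primitive-PG}, and extract a minimum generating set of $G$ as well by a mild adaptation of the algorithm of that theorem: in the small-$|G|$ case of its proof the brute-force enumeration produces such a set directly, and in the product-type case one recursively produces a minimum generating set of $G/\mathrm{soc}(G)$ and lifts it using the explicit structural description of primitive groups of product type from Theorem~\ref{Base-order-primitive-permutation}. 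Lift this to a minimum generating set $\{x_1 B, \dots, x_t B\}$ of $W/B$.

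Next, compute a chief series $B = B_0 \trianglelefteq B_1 \trianglelefteq \cdots \trianglelefteq B_k = 1$ of $W$ refining $B$, in polynomial time via standard permutation-group routines. Since $B$ is solvable, every factor $B_{i-1}/B_i$ is abelian, so the polynomial-time abelian branch of Lemma~\ref{Gen-Set-Quotient} iteratively extends a minimum generating set of $W/B_{i-1}$ to one of $W/B_i$ and thereby yields $d(W/B_i)$ at each step. After $k = n^{O(1)}$ steps we have obtained a minimum generating set of $W = W/B_k$ and hence $d(W)$. The total running time is quasi-polynomial in $n$, dominated by the single call to Theorem~\ref{MinGen for Primitive-PG}.

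The coprime hypothesis $\gcd(|H/H'|, |G|) = 1$ enters in the topmost portion of the descent, inside $B/B'$: the chief factors of $W$ there lie in $(H/H')^n$ and have orders coprime to $|W/B| = |G|$, so by Maschke's theorem $(H/H')^n$ is a semisimple $G$-module and the relevant extensions split by Schur--Zassenhaus, making the abelian branch of Lemma~\ref{Gen-Set-Quotient} output the genuine minimum at each such step. Below $B'$ the chief factors are still abelian (as $(H')^n$ is solvable) and the abelian branch applies unconditionally. The main obstacle I foresee is extracting a minimum generating set of $G$ from Theorem~\ref{MinGen for Primitive-PG}: a direct application of the non-abelian branch of Lemma~\ref{Gen-Set-Quotient} to lift from $G/\mathrm{soc}(G)$ to $G$ is too slow when $\mathrm{soc}(G) \cong A_m^l$ is large, so one must exploit the product-type structure to write down socle generators explicitly rather than searching by brute force; verifying this step carefully is the only nontrivial piece of the argument.
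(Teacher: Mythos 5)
Your plan has a genuine gap at the one place you flag as ``the only nontrivial piece,'' and it is not a detail that can be patched by the hint you give. Everything after the top step is fine: given an actual minimum generating set of $W/B\cong G$ (where $W=H\wr G$, $B=H^n$), descending a chief series of $W$ through the solvable base $B$ and applying the abelian branch of Lemma~\ref{Gen-Set-Quotient} at each step is polynomial time and would indeed produce a minimum generating set of $W$. But the top step is exactly what is not available: Theorem~\ref{MinGen for Primitive-PG} outputs only the number $d(G)$, and the paper says so explicitly, because in the product-type case it computes $d(G)=\max(2,d(G/\mathrm{soc}(G)))$ via Theorem~\ref{unique-minimal-thm} without ever constructing generators of $G$. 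To convert this into a minimum generating set you would have to lift a generating set of $G/\mathrm{soc}(G)$ across the non-abelian minimal normal subgroup $\mathrm{soc}(G)\cong A_m^l$, and the only tool in the paper for that is Theorem~\ref{Non-Abelian-Theorem}, whose search over correcting tuples costs $|\mathrm{soc}(G)|^{t+1}$; with, say, $k=l=1$ this is $(m!/2)^{t+1}=2^{\Omega(n\log n)}$, far beyond quasi-polynomial. Your proposed remedy --- ``write down socle generators explicitly'' --- does not touch the bottleneck, which is not generating the socle but finding the elements $x_1,\dots,x_t\in\mathrm{soc}(G)$ that correct the lifted generators; no quasi-polynomial method for that is given or known in the paper. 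A secondary sign that the structure of your argument is off: in your scheme the hypothesis $\gcd(|H/H'|,|G|)=1$ plays no role at all (the abelian branch of Lemma~\ref{Gen-Set-Quotient}, via Theorem~\ref{Abelian-Theorem}, needs no coprimality), so your Maschke/Schur--Zassenhaus justification is spurious --- yet the hypothesis is genuinely needed in the statement's intended proof, which should make you suspicious.

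For comparison, the paper avoids constructing any generating set of $H\wr G$ altogether. It invokes two formulas of Lucchini for wreath products: under the coprimality hypothesis, $d((H/H')\wr G)=\max\{d(H/H')+1,\,d(G)\}$, and in general $d(H\wr G)=\max\{d((H/H')\wr G),\,\tfrac{d(H)-2}{n}+2\}$. Then $d(H)$ and $d(H/H')$ come from the solvable algorithm (Theorem~\ref{Solvable-PG-Theorem}) and $d(G)$ from Theorem~\ref{MinGen for Primitive-PG}, so only sizes are ever needed and the coprimality is exactly what licenses the first formula. If you want to salvage your route, you would first have to strengthen Theorem~\ref{MinGen for Primitive-PG} to return an actual minimum generating set of a primitive group in quasi-polynomial time, which is a separate (and unresolved in this paper) problem.
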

\begin{proof}
Since $H/H'$ is an abelian and $\mathrm{gcd}(|H/H'|,|G|)=1$ we have $d((H/H') \wr G)= \max\{d(H/H')+1, d(G)\}$ (see e.g., Corollary 6, \cite{lucchini-wreath-product-1997generating}). 
Using Theorem \ref{Solvable-PG-Theorem} and \ref{MinGen for Primitive-PG} we can compute $d((H/H') \wr G)$ in quasi-polynomial time. Also it is known that 
$d(H \wr G)= \max \{ d((H/H') \wr G, \frac{d(H)-2}{n}+2\}$  (see e.g., Theorem 2, \cite{lucchini-wreath-product-1997generating}). 
\end{proof}

\section{Permutation Groups with Bounded Non-abelian Chief Factors}\label{Min-Gen-Bounded-non-abelain-chief-factor}
In this section, we design a $\mathrm{DTIME}(2^n)$ time algorithm to solve the minimum generating set problem for groups with bounded non-abelian chief factors where $n$ is the degree of the input permutation group. The algorithm is based on the ideas by Lucchini and Menegazzo \cite{lucchini-minimal-solvable}.

Note that the class ${\Gamma}(l)$ of groups whose composition factors can be embedded in $S_l$ was considered by Babai, Cameron, and P\'alfy in the context of composition factor \cite{babai-palfy-cameron}. Note that the composition factor of groups in $\Gamma(l)$ are of bounded size for constant $l$. The class $\chi(l)$, defined below, is a  natural analog of ${\Gamma}(l)$ in the context of chief factors. It is defined as follows: 

\begin{definition}
For a positive integer $l$, let $\chi(l)$ denote the class of all permutation groups each of whose non-abelian chief factors are of order at most $l$, i.e.,
$$\chi(l)=\{G \,|\text{ every non-abelian chief factor of } G \text{ has order at most }l\}.$$
\end{definition}

\begin{theorem}\label{general-bound-X(l)}
Let $G \leq S_n$ be a group in the class $\chi(l)$. Then there is an algorithm to find a minimum generating set of $G$ that runs in time $l^{d(G)+1} n^{O(1)}$. 
\end{theorem}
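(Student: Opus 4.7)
The plan is to process a chief series of $G$ from the top down, lifting a minimum generating set through one chief factor at a time by invoking Lemma~\ref{Gen-Set-Quotient} at each step. First, I would compute a chief series
\[
1 = G_m \trianglelefteq G_{m-1} \trianglelefteq \cdots \trianglelefteq G_0 = G
\]
together with generating sets of each $G_i$; this is known to be doable in polynomial time for permutation groups. The top factor $G/G_1$ is a minimal normal subgroup of itself, hence simple, so a minimum generating set of $G/G_1$ can be read off directly (one non-identity element in the abelian case; a fixed $x$ together with a search over a second generator $y$ in the non-abelian case, which costs $O(|G/G_1|)\cdot n^{O(1)} \leq l\cdot n^{O(1)}$).

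Inductively, suppose a minimum generating set of $G/G_{i-1}$ is already known and set $t := d(G/G_{i-1})$. I would then apply Lemma~\ref{Gen-Set-Quotient} with $A := G_i$ and $H := G_{i-1}$: by the chief series property, $H/A = G_{i-1}/G_i$ is a minimal normal subgroup of $G/G_i$, and generators of $A$ and $H$ are already in hand from the initial computation. The lemma delivers a minimum generating set of $G/G_i$ in time $|G_{i-1}/G_i|^{t+1}\,n^{O(1)}$. After $m$ such steps we reach $G/G_m = G$ with a minimum generating set.

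For the runtime, I would split the chief factors into two classes. If $G_{i-1}/G_i$ is abelian, Lemma~\ref{Gen-Set-Quotient} runs in polynomial time regardless of the size of the factor, using the structural refinements in Theorems~\ref{Generalization-of-lucchini-lemma} and~\ref{Abelian-Theorem}. If $G_{i-1}/G_i$ is non-abelian, the hypothesis $G \in \chi(l)$ gives $|G_{i-1}/G_i| \leq l$, and since quotients cannot need more generators than $G$ we have $t \leq d(G)$, so that step costs at most $l^{d(G)+1}\,n^{O(1)}$. The number of chief factors is at most $\log_2|G| = O(n\log n)$, which is absorbed into the polynomial overhead, giving a total cost of $l^{d(G)+1}\,n^{O(1)}$ as required.

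The main point I expect to need care with is not algorithmic novelty but the plumbing: Lemma~\ref{Gen-Set-Quotient} is stated for a single lift from $G/H$ to $G/A$, so I must check that in the inductive loop all of its hypotheses are supplied at each iteration, namely generators of $A = G_i$, the generators $h_1,\dots,h_l$ of $H = G_{i-1}$ exhibiting $H/A$ as the chosen minimal normal subgroup of $G/A$, and a minimum generating set of $G/H = G/G_{i-1}$ from the previous iteration. All of this is provided by the chief series computed up front, and the permutation-group subroutines (membership testing in quotients, minimal normal subgroup of a quotient, homomorphism evaluation) that Lemma~\ref{Gen-Set-Quotient} already relies on are known to admit polynomial-time implementations, so no further obstacle arises.
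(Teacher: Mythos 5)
Your proposal is correct and follows essentially the same route as the paper: compute a chief series, handle the simple top factor directly, and then lift a minimum generating set through each chief factor via Lemma~\ref{Gen-Set-Quotient} with $A=G_i$, $H=G_{i-1}$, bounding the non-abelian steps by $l^{d(G)+1}n^{O(1)}$ using $|G_{i-1}/G_i|\leq l$ and $d(G/G_i)\leq d(G)$. The only cosmetic difference is at the top factor, where you fix one generator and search for the second (Guralnick--Kantor style, as in the lemma's base case) while the paper simply tries all two-element subsets in $l^2 n^{O(1)}$ time; both are fine.
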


\begin{proof}
The first step in the algorithm is to compute a chief series of $G$: $$ 1= G_{m} \trianglelefteq  G_{m-1} \trianglelefteq  \dots \trianglelefteq G_1 \trianglelefteq G_{0}=G.$$

It is known that a chief series can be computed in polynomial time \cite{Ako-Seress}.

The algorithm now proceeds in stages. In the $1$st stage the algorithm computes a minimum generating set of $G/G_{1}$. Note that $G/G_1$ is a chief factor and it is also a simple group. If $G/G_1$ is abelian then any non-identity element will be a generator of this group. If $G/G_1$ is non-abelian then $|G/G_1| \leq l$ as $G \in \chi(l)$. We also know that a minimum generating set of any non-abelian simple group is of size $2$. Therefore, a minimum generating set of $G/G_1$ could be found in time $l^2 n^{O(1)}$ by trying all possible two elements subsets of $G/G_1$.

In the $i$th stage, the algorithm computes a minimum generating set of $G/G_i$ assuming that a minimum generating set of $G/G_{i-1}$ has already been computed in the previous stage. 

Let $\{g_1G_{i-1}, \ldots, g_tG_{i-1}\}$ be a minimum generating set of $G/G_{i-1}$. Notice that $G_{i-1}/G_{i}$ is a minimal normal subgroup of $G/G_{i}$ and $(G/G_{i})/(G_{i-1}/G_{i}) \cong G/G_{i-1}$. Now we apply Lemma \ref{Gen-Set-Quotient} with $A=G_{i}$, $\widetilde{N}=G_{i-1}/G_{i}$ and $H=G_{i-1}$ to get a minimum generating set of $G/A=G/G_{i}$. If the chief factor $\widetilde{N}=G_{i-1}/G_{i}$ is abelian then the $i$th stage takes polynomial time. Otherwise if the chief factor is non-abelian then a minimum generating set of $G/A=G/G_{i}$ can be found in time $|\widetilde{N}|^{t+1} n^{O(1)}$. But $|\widetilde{N}| \leq l$ as $\widetilde{N}$ is a chief factor of $G \in \chi(l)$. Therefore, the $i$th stage takes time $l^{t+1} n^{O(1)}$. Since $d(G/G_{i}) \leq d(G)$ for all $i$. Each stage takes time $l^{d(G)+1} n^{O(1)}$. As $G/G_m \cong G$, in the $m$th stage the algorithm outputs a minimum generating set of $G$. Since the number of stages is at most $O(n \log n)$ the total running time is $l^{d(G)+1} n^{O(1)}$.
\end{proof}

\begin{remark}
    The above algorithm is FPT with respect to the parameters $(l,d(G))$.
\end{remark}

An easy application of Theorem \ref{general-bound-X(l)} shows that a minimum generating set of a permutation groups in $\chi(l)$ is in $\mathrm{DTIME}(2^{O(n)})$.

\begin{corollary}
The minimum generating set problem for permutation groups in $\chi(l)$ is in $\mathrm{DTIME}(2^{O(n)})$.
\end{corollary}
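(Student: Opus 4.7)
The plan is to apply Theorem~\ref{general-bound-X(l)} directly, whose runtime is $l^{d(G)+1} n^{O(1)}$, and then bound $d(G)$ in terms of $n$ alone so that this quantity collapses to $2^{O(n)}$ for any fixed $l$. The whole content of the corollary is therefore just to quote the theorem and plug in a degree-based upper bound on $d(G)$.

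The key step is the bound on $d(G)$. For any permutation group $G \leq S_n$, Jerrum's filter argument (already cited in the introduction) guarantees that $d(G) \leq n-1$, and in fact one has the stronger bound $d(G) \leq n/2$ (also noted in the introduction, attributed to Lucchini--Mara\'oti). Either bound is good enough: treating $l$ as a fixed constant, we obtain
\[
l^{d(G)+1} n^{O(1)} \;\leq\; l^{n-1+1} n^{O(1)} \;=\; 2^{(\log_2 l)\cdot n}\, n^{O(1)} \;=\; 2^{O(n)}.
\]
So the claimed runtime follows immediately from Theorem~\ref{general-bound-X(l)}.

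There is no real obstacle here; the corollary is essentially a bookkeeping statement. The only thing worth being careful about is the dependence on $l$: the hidden constant in $2^{O(n)}$ depends linearly on $\log l$, which is the sense in which $l$ is regarded as a constant (the problem statement fixes $l$). If one preferred to keep $l$ explicit, the same argument yields the sharper bound $2^{O(n \log l)} \, n^{O(1)}$, which is inside $\mathrm{DTIME}(2^{O(n)})$ as long as $l$ is bounded by any polynomial in $n$.
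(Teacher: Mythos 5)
Your argument is exactly the paper's: quote Theorem~\ref{general-bound-X(l)} and bound $d(G)\leq n-1$ via Jerrum's filter, giving $l^{d(G)+1}n^{O(1)}\leq 2^{O(n\log l)}=2^{O(n)}$ for fixed $l$. One small caveat on your closing aside: if $l$ were allowed to grow polynomially in $n$, the bound $2^{O(n\log l)}$ becomes $2^{O(n\log n)}$, which is \emph{not} $2^{O(n)}$ --- but since $l$ is a fixed constant in the definition of $\chi(l)$, this does not affect the corollary.
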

\begin{proof}
This follows from the fact that any permutation group has a generating set of size at most $n-1$ \cite{jerrum1986compact}.
\end{proof}

\begin{theorem}
Let $G\in \chi(l)$ be such that $G$ is transitive, then there is a sub-exponential algorithm with runtime  $l^{o(n)+O(1)} n^{O(1)}$  to find a minimum generating set of $G$.
\end{theorem}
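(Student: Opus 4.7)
The plan is to directly apply Theorem \ref{general-bound-X(l)} and combine it with a well-known sub-linear upper bound on $d(G)$ for transitive permutation groups. Theorem \ref{general-bound-X(l)} already produces a minimum generating set of any $G \in \chi(l)$ in time $l^{d(G)+1} n^{O(1)}$. Hence to obtain a runtime of $l^{o(n)+O(1)} n^{O(1)}$ it suffices to verify that $d(G) = o(n)$ whenever $G \leq S_n$ is a transitive permutation group.

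For the bound on $d(G)$, I would invoke the classical result (due to Kov\'acs and Newman, later refined by Lucchini, Menegazzo, and Morigi) stating that every transitive permutation group $G$ of degree $n$ satisfies $d(G) = O(n/\sqrt{\log n})$. This sub-linear bound is strictly stronger than what is needed; in particular $d(G) = o(n)$, with the implicit constant uniform across all transitive $G \leq S_n$. Substituting this into the runtime of Theorem \ref{general-bound-X(l)} yields
\[
l^{d(G)+1} n^{O(1)} \;\leq\; l^{O(n/\sqrt{\log n})+1} n^{O(1)} \;=\; l^{o(n)+O(1)} n^{O(1)},
\]
which is exactly the claimed bound.

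Since the algorithmic machinery is entirely inherited from Theorem \ref{general-bound-X(l)}, no new algorithmic ideas are required in this proof; the only content is the substitution of the sub-linear generator bound for transitive groups. Accordingly, the main (minor) obstacle is merely to identify and cite the appropriate reference for the $O(n/\sqrt{\log n})$ bound on $d(G)$ and to note that the constants are uniform, so that the $o(n)$ asymptotics plug cleanly into the exponent.
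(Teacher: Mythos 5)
Your proposal is correct and matches the paper's proof essentially verbatim: the paper also cites the $d(G)\leq cn/\sqrt{\log n}$ bound for transitive groups and plugs it into Theorem~\ref{general-bound-X(l)} to get the $l^{o(n)+O(1)}n^{O(1)}$ runtime. No differences worth noting.
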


\begin{proof}
There exists a constant $c >0$ such that for all transitive permutation group $G$, $d(G) \leq cn/(\sqrt{\log n})$ \cite{transitive-pg-bound}. Thus Theorem \ref{general-bound-X(l)} implies that there is an algorithm to find a minimum generating set of $G$ in time $l^{o(n)+O(1)} n^{O(1)}$.
\end{proof}

\section{Acknowledgement}
The authors would like to thank V. Arvind for discussion and his valuable inputs on the problems.

\bibliographystyle{plainurl}

\bibliography{samplepaper}



\newpage

\section{Appendix}
We now prove Theorem \ref{bound-on-d(G)} when $N$ is abelian. 

\begin{theorem}\cite{lucchini-Generators-and-minimal-normal}\label{bound-on-d(G)-abelian-case}
If $G$ is a finite group and $N$ is an abelian minimal normal subgroup of G, then $d(G)\leq max(2,d(G/N)+1)$. In particular, $d(G/N)\leq d(G) \leq d(G/N)+1$.
\end{theorem}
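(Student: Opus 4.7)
The plan is to prove the stronger bound $d(G) \le d(G/N)+1$, from which $d(G) \le \max(2, d(G/N)+1)$ follows immediately. The lower bound $d(G/N) \le d(G)$ is routine: the quotient map $G \to G/N$ sends any generating set of $G$ to one of $G/N$.

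For the upper bound, let $t = d(G/N)$ and pick $g_1,\ldots,g_t \in G$ such that $\{g_1N,\ldots,g_tN\}$ generates $G/N$. Set $H = \langle g_1,\ldots,g_t\rangle$. If $H = G$ we are already done with $t \le t+1$ generators. Otherwise, I plan to show that $\langle H, x\rangle = G$ for every non-identity element $x \in N$, which yields $d(G) \le t+1$.

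The main step, and the only one that genuinely uses that $N$ is abelian, is the following observation: for any subgroup $K \le G$ with $KN = G$, the intersection $K \cap N$ is normal in $G$. Indeed $K \cap N$ is clearly normal in $K$, and since $N$ is abelian it centralizes (hence normalizes) $K \cap N$, so $K \cap N$ is normalized by $KN = G$. Applying this with $K = \langle H, x\rangle$ (which still satisfies $KN = G$ since $H$ already surjects onto $G/N$), we see that $K \cap N$ is a normal subgroup of $G$ contained in $N$ and containing $x \ne 1$. By minimality of $N$ we conclude $K \cap N = N$, hence $N \subseteq K$ and therefore $K = KN = G$. This normality argument is the crux of the proof; I expect it to be the only real obstacle, and everything else reduces to assembling the pieces.

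To finish, the inequality $d(G) \le d(G/N)+1$ already implies $d(G) \le \max(2, d(G/N)+1)$ except when $d(G/N)+1 < 2$, i.e.\ when $d(G/N) = 0$. In that corner case $G = N$ is both abelian and a minimal normal subgroup of itself; since every subgroup of an abelian group is normal, minimality forces $N$ to have no proper nontrivial subgroups, so $N$ is cyclic of prime order and $d(G) = 1 \le 2$, completing the bound.
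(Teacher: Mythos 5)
Your proof is correct, and it takes a somewhat different route from the paper's. Both arguments share the same skeleton: lift a minimum generating set $\{g_1N,\ldots,g_tN\}$ of $G/N$ and show that adjoining an arbitrary non-identity $x\in N$ produces a generating set of $G$, invoking minimality of $N$ at the decisive step. The mechanisms differ, though. The paper applies minimality to the normal closure $\langle \{x\}^G\rangle$, concludes it equals $N$, and then rewrites each conjugate $h^{-1}xh$ with $h=n'g_{j_1}\cdots g_{j_r}$, using that $N$ is abelian to cancel $n'$, so that all of $N$ is reachable from $g_1,\ldots,g_t,x$. You instead apply minimality to $K\cap N$ for $K=\langle g_1,\ldots,g_t,x\rangle$, after the key observation that $K\cap N$ is normalized by $K$ and, since $N$ is abelian, also by $N$, hence by $KN=G$; this forces $K\cap N=N$, so $N\leq K$ and $K=KN=G$. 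Your version is arguably cleaner: it avoids explicit word manipulation and sidesteps the paper's small gloss of writing a general element of $N=\langle\{x\}^G\rangle$ as a single conjugate of $x$ rather than a product of conjugates (the paper's computation extends to products, but as written it skips this). One cosmetic remark: since $\max(2,d(G/N)+1)\geq d(G/N)+1$ always, your bound $d(G)\leq d(G/N)+1$ gives the stated inequality with no exceptional case; the $d(G/N)=0$ case is not actually an exception (and your main argument already covers $G=N$, forcing $G$ cyclic of prime order), though treating it separately does no harm.
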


\begin{proof}
If $G=N$ then $G$ is simple and $d(G) \leq 2$. 

Let  $\{g_{1}N,\ldots,g_{t}N \}$ be a minimum generating set of $G/N$. We show that for every $x\,(\neq1) \in N$ the set $\{g_{1}, \ldots,g_{t},x\}$ generates $G$. Fix $x (\neq 1) \in N$. Consider the normal closure of $x$ i.e., $\langle \{x\}^G \rangle = \langle \{g^{-1}xg | g \in G \} \rangle$. Since $ 1\neq \langle \{x\}^G \rangle \leq N$, $\langle \{x\}^G \rangle \trianglelefteq G $ and $N$ is a minimal normal subgroup of $G$ we have $\langle \{x\}^G \rangle=N$. Let $g \in G $, then $g=n g_{i_1}\cdots g_{i_p}$ for some $n \in N$ and $1 \leq i_1,i_2,\ldots i_p \leq t$. Therefore it is enough to show that any $n\in N$ can be generated using the elements $g_1,\ldots,g_t$ and $x$. Since $n \in N$ we get $n=h^{-1}xh $ for some $h \in G$. There exists $n' \in N$ and $1 \leq j_1,\ldots,j_r \leq t$ such that $h=n' g_{j_1}\cdots g_{j_r}$. Since $N$ is abelian we obtain, $n=(n' g_{j_1}\cdots g_{j_r})^{-1}x(n' g_{j_1}\cdots g_{j_r})=( g_{j_1}\cdots g_{j_r})^{-1} x (g_{j_1}\cdots g_{j_r})$. Thus, $G =\langle \{g_{1},\ldots,g_{t},x\}\rangle$.
\end{proof}

We now give the proof of Theorem~\ref{Generalization-of-lucchini-lemma} which can be proved using the following lemma.

\begin{lemma}\cite{lucchini-minimal-solvable}\label{Lucchini-extension-lemma} Let $N$ be an abelian normal subgroup of a group $G$, suppose that $\{e_{1},\dots,e_{m}\}$ is a  generating set of $N$ and $G/N=\langle g_{1}N,\dots,g_{t}N\rangle$. If $\langle g_{1},\dots,g_{t}\rangle \cap N = 1$, but there exists $u_{1},\dots,u_{t} \in N$ such that $\langle g_{1}u_{1},\dots,g_{t}u_{t}\rangle \cap N \neq 1$, then there exists $i$, $1 \leq i \leq d$, and $j, 1 \leq j \leq m$, such that  \[\langle g_{1},\dots,g_{i-1},g_{i}e_{j},g_{i+1},\dots,g_{t} \rangle \cap N \neq 1.\]
\end{lemma}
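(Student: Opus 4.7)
The plan is to exploit the fact that $N$ is abelian by viewing it as a $\mathbb{Z}[G/N]$-module via conjugation, which linearizes the dependence of $\pi_u(w)$ on $u$ for $w$ in an appropriate kernel. First, I would set up the free group $F = \langle x_1, \ldots, x_t \rangle$ and, for each $u = (u_1,\ldots,u_t) \in N^t$, define the homomorphism $\pi_u : F \to G$ by $\pi_u(x_i) = g_i u_i$; writing $\pi = \pi_{(e,\ldots,e)}$, let $R = \{w \in F : \pi(w) \in N\}$, which is a subgroup independent of $u$ because all the $\pi_u$'s coincide modulo $N$. The hypothesis $\langle g_1,\ldots,g_t\rangle \cap N = 1$ forces $\pi(w) = e$ for every $w \in R$, while the hypothesis on $u$ supplies some $w_0 \in R$ with $\pi_u(w_0) \neq e$; the goal is to convert this witness into a single-coordinate perturbation by one of the generators $e_j$.

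The key technical step is to show that, for every $w \in R$,
\[
\pi_u(w) \;=\; \sum_{i=1}^t \alpha_i(w) \cdot u_i \qquad \text{(in } N\text{, written additively),}
\]
where $\alpha_i(w) \in \mathbb{Z}[G/N]$ and $N$ is regarded as a $\mathbb{Z}[G/N]$-module via conjugation (the action factors through $G/N$ because $N$ is abelian). To derive this, I would write $w = x_{i_1}^{\epsilon_1}\cdots x_{i_k}^{\epsilon_k}$, expand $\pi_u(w) = (g_{i_1}u_{i_1})^{\epsilon_1}\cdots(g_{i_k}u_{i_k})^{\epsilon_k}$, and push each $N$-factor to the right using the rule $n \cdot g = g \cdot (g^{-1} n g)$. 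Because $N$ is abelian and normal, the resulting conjugates of the $u_i$'s commute and all cross-products among them vanish, so the contribution is linear in $u$; collecting the coefficient of each $u_i$ gives the required element $\alpha_i(w) \in \mathbb{Z}[G/N]$. This identity is essentially the Fox free derivative specialized to an abelian normal subgroup, and verifying it cleanly is the main bookkeeping obstacle of the proof.

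Given the linearization, the conclusion is immediate. Since $\{e_1,\ldots,e_m\}$ generates $N$, we can write each $u_i = \sum_j n_{ij} e_j$ with $n_{ij} \in \mathbb{Z}$, so the formula above yields
\[
\pi_u(w_0) \;=\; \sum_{i,j} n_{ij}\, \alpha_i(w_0) \cdot e_j.
\]
As $\pi_u(w_0) \neq e$, at least one summand $\alpha_i(w_0) \cdot e_j$ must be nontrivial in $N$. Choosing $u' \in N^t$ with $e_j$ in coordinate $i$ and identity elsewhere, the same formula specializes to $\pi_{u'}(w_0) = \alpha_i(w_0) \cdot e_j \neq e$. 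Since $w_0 \in R$, this shows that $\langle g_1,\ldots,g_{i-1},g_i e_j,g_{i+1},\ldots,g_t\rangle \cap N$ contains the nontrivial element $\pi_{u'}(w_0)$, proving the claim.
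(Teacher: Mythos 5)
Your argument is correct. Note that the paper does not prove this lemma at all---it is quoted from Lucchini--Menegazzo \cite{lucchini-minimal-solvable} and used as a black box in the Appendix proof of Theorem~\ref{Generalization-of-lucchini-lemma}---so there is no in-paper proof to compare against; your write-up is a valid self-contained substitute. The key points all check out: $R=\{w\in F:\pi(w)\in N\}$ is indeed independent of $u$ since every $\pi_u$ agrees with $\pi$ modulo $N$, the hypothesis $\langle g_1,\dots,g_t\rangle\cap N=1$ kills the constant term $\pi(w)$ for $w\in R$, and the collection identity $(h_1n_1)\cdots(h_kn_k)=(h_1\cdots h_k)\prod_s n_s^{h_{s+1}\cdots h_k}$ (with the conjugators depending only on the $g_i$'s, and conjugation on $N$ factoring through $G/N$ because $N$ is abelian) gives exactly the linearity $\pi_u(w)=\sum_i\alpha_i(w)\cdot u_i$ with $\alpha_i(w)\in\mathbb{Z}[G/N]$; expanding each $u_i$ over the generators $e_j$ and specializing to a single nonvanishing coordinate then yields the required $i,j$. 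This is essentially the standard derivation/Fox-derivative argument underlying the original result, so your route coincides in spirit with the cited source even though the paper itself omits the proof.
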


\noindent \textbf{Proof of Theorem~\ref{Generalization-of-lucchini-lemma}:}
Assume that $\langle g_{1},\dots,g_{t}\rangle \cap N \neq 1$. Let  $y$ be a non-identity element  in  $\langle g_{1},\dots,g_{t}\rangle \cap N$. Since $N$ is abelian Theorem \ref{bound-on-d(G)} implies that for every $x\,(\neq1) \in N$, $\{g_{1}, \ldots,g_{t},x\}$ is a generating set of $G$. In particular, $\langle g_{1},\dots,g_{t},y\rangle=G$. Since $y \in \langle g_{1},\dots,g_{t}\rangle$, $G= \langle g_{1},\dots,g_{t}\rangle$.

Let $\langle g_{1},\ldots,g_{t}\rangle \cap N = 1$. Since $G$ can be generated by $t$ elements, by Theorem \ref{Gaschütz-thorem}, there exists $u_{1},\ldots,u_{t} \in N$ such that $\langle g_{1}u_{1},\ldots,g_{d}u_{t}\rangle = G$. Moreover, $G \cap N= \langle g_{1}u_{1},\ldots,g_{t}u_{t}\rangle \cap N = N \neq 1$. We can now apply Lemma \ref{Lucchini-extension-lemma}: $\langle g_{1},\dots,g_{t}\rangle \cap N = 1$, but there exist $u_{1},\dots,u_{t} \in N$ such that $\langle g_{1}u_{1},\dots,g_{t}u_{t}\rangle \cap N \neq 1$. Therefore, there exists $i$, $1 \leq i \leq t$, and $j, 1 \leq j \leq m$, such that  $\langle g_{1},\dots,g_{i-1},g_{i}e_{j},g_{i+1},\dots,g_{t} \rangle \cap N \neq 1$. \qed

\vspace{.2cm}

Next, we prove Lemma \ref{Product-PG-Lemma}

\begin{lemma}\label{Product-PG-Lemma-Appendix}
Let $G=M \times K \leq S_{n}$ where  $M$ is a solvable permutation group and $K$ is any permutation group such that a minimum generating set  $\{ k_1,\ldots ,k_r \}$ of $K$ is given. Let $A \times \{e\}$ be a normal subgroup of $G$. Then a minimum generating set of $G/A$ could be found in polynomial time.
\end{lemma}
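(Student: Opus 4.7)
The strategy is to mimic the argument for Lemma~\ref{Solvable-PG-Lemma}, but to restrict the ``peeling off'' of normal subgroups to the solvable factor $M$ so that every minimal normal subgroup encountered is abelian. If one were to apply the generic procedure of Lemma~\ref{Solvable-PG-Lemma} directly to $G=M\times K$, a minimal normal subgroup of $G/(A\times\{e\})$ might arise from the $K$ factor and be non-abelian, which is exactly what we want to avoid since $K$ is not assumed solvable.

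Concretely, first identify $G/(A\times\{e\})$ with $(M/A)\times K$ and note that $\bar M := M/A$ is solvable. If $\bar M=\{e\}$, then $G/(A\times\{e\})\cong K$ and the given set $\{k_1,\dots,k_r\}$ is already a minimum generating set, so we are done. Otherwise, compute in polynomial time a minimal normal subgroup $\bar N$ of $\bar M$; since $\bar M$ is solvable, $\bar N$ is abelian. Lift $\bar N$ to a normal subgroup $N$ of $M$ with $A\leq N$ and $N/A=\bar N$, and set $H:=N\times\{e\}$. Then $H$ is normal in $G$, and because $K$ acts trivially on the $M$-component, any normal subgroup of $G/(A\times\{e\})$ contained in $(M/A)\times\{e\}$ pulls back to a normal subgroup of $\bar M$; hence $H/(A\times\{e\})\cong\bar N$ is an abelian minimal normal subgroup of $G/(A\times\{e\})$.

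Now recurse: $G/H\cong(M/N)\times K$, where $M/N$ is solvable and a minimum generating set of $K$ is still in hand, so this is an instance of the same lemma on a strictly smaller solvable factor. Having computed a minimum generating set of $G/H$, apply Lemma~\ref{Gen-Set-Quotient} with the abelian $\widetilde N=H/(A\times\{e\})$ to lift it, in polynomial time, to a minimum generating set of $G/(A\times\{e\})$.

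For the running time, each recursive call shrinks $|\bar M|$ by a factor of at least $|\bar N|\geq 2$, so the recursion depth is $O(\log|M|)=O(n\log n)$, and every level does only polynomial work (a minimal-normal-subgroup computation in a permutation quotient, plus an application of Lemma~\ref{Gen-Set-Quotient} in its abelian case). The only genuinely new ingredient beyond Lemma~\ref{Solvable-PG-Lemma} is the verification that a minimal normal subgroup of the solvable factor $\bar M$, trivially extended by $\{e\}$ in the $K$-coordinate, remains a minimal normal subgroup of the product $G/(A\times\{e\})$; I expect this to be the only step requiring care.
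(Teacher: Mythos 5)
Your proposal is correct and follows essentially the same route as the paper: compute a minimal normal subgroup of the solvable quotient $M/A$ (hence abelian), extend it trivially on the $K$-coordinate to get an abelian minimal normal subgroup of $G/(A\times\{e\})$, recurse on the larger normal subgroup, and lift via the abelian case of Lemma~\ref{Gen-Set-Quotient}, with $O(n\log n)$ recursion depth. The minimality verification you flag as the delicate point is exactly the step the paper dismisses as ``easy to check,'' and your sketch of it is sound.
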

\begin{proof}
Here too we design a recursive algorithm similar to the one in Lemma \ref{Solvable-PG-Lemma}. The base case is when $M/A=\{e\}$ which can be solved easily.  

Otherwise we find a minimal normal subgroup $\widetilde{N}_{1}$ of $M/A$. $\widetilde{N}_{1}$ will be of the form $H/A$ for some normal subgroup $H=\langle h_1,\ldots,h_l \rangle$ of $M$. It is easy to check that $\widetilde{N} = (H \times \{e\}) / (A \times \{e\})$ is a minimal normal subgroup of $(M \times K) / (A \times \{e\})$. Next we recursively compute a minimum generating set of $(M \times K) / (H \times \{e\})$. As before $\widetilde{N}$ will be abelian and we can use Lemma \ref{Gen-Set-Quotient} to find a minimum generating set of $(M \times K) / (A \times \{e\})$ in polynomial time. The total running time  is polynomial as the number of recursive call is $O(n \log n)$.  
\end{proof}
\end{document}